\documentclass[11pt]{amsart}
\usepackage{indentfirst,latexsym,bm}
\usepackage{amsfonts}
\usepackage{amssymb}
\usepackage{amsmath}
\usepackage{hyperref}
\usepackage{dsfont}
\usepackage{leftidx}
\usepackage{amsthm}
\usepackage{geometry}

\usepackage{color,xcolor}
\usepackage[all]{xy}

\oddsidemargin -.2in \evensidemargin -.2in \topmargin -.3in
\headheight .2in \headsep .2in \textwidth 17cm \textheight 24cm
\footskip .2in \footnotesep .1in

\begin{document}
 \thispagestyle{empty}
\title[Cocycle deformations and Galois objects]
{Cocycle deformations and Galois objects of semisimple Hopf algebras of dimension $16$}
\author[R. Xiong]{Rongchuan Xiong}
\address{Rongchuan Xiong: School of Mathematical Sciences, East China Normal University, Shanghai 200241, China}
\email{rcxiong@foxmail.com}
\author[Z. Yu]{Zhiqiang Yu$^*$}
\address{Zhiqiang Yu: School of Mathematical Sciences, East China Normal University, Shanghai 200241, China}
\email{zhiqyu-math@hotmail.com}
\thanks{$^*$Corresponding author. The authors were partially supported by the NSFC (Grant No.11771142).}
\subjclass[2010]{16T05; 16S35; 18D10}
\maketitle

\newtheorem{thm}{Theorem}[section]
\newtheorem{pro}[thm]{Proposition}
\newtheorem{lem}[thm]{Lemma}
\newtheorem{cor}[thm]{Corollary}
\theoremstyle{definition}
\newtheorem{defi}[thm]{Definition}
\newtheorem{Example}[thm]{Example}
\newtheorem{rmk}[thm]{Remark}

\newcommand{\A}{\mathcal{A}}
\newcommand{\B}{\mathcal{B}}
\newcommand{\C}{\mathcal{C}}
\newcommand{\D}{\mathcal{D}}
\newcommand{\M}{\mathcal{M}}
\newcommand{\K}{\mathds{k}}
\newcommand\Rep{\operatorname{Rep}}
\newcommand\Corep{\operatorname{Corep}}
\newcommand{\Z}{\mathbb{Z}}

\newcommand{\ULa}{\underline{\mathbf{a}}}
\theoremstyle{plain}
\newcounter{maint}
\renewcommand{\themaint}{\Alph{maint}}
\newtheorem{mainthm}[maint]{Theorem}

\theoremstyle{plain}
\newtheorem*{proofthma}{Proof of Theorem A}
\newtheorem*{proofthmb}{Proof of Theorem B}
\begin{abstract}
In this article, we determine   cocycle deformations and Galois objects of non-commutative and non-cocommutative semisimple Hopf algebras of dimension $16$. We show that these Hopf algebras are pairwise twist inequivalent mainly by  calculating their higher Frobenius-Schur indicators, and that except three Hopf algebras which are cocycle deformations of dual group algebras, none of them admit non-trivial cocycle deformations.

\bigskip
\noindent {\bf Keywords:} Semisimple Hopf algebras; Galois objects; Cocycle deformations; Frobenius-Schur indicators.
\end{abstract}
\section{Introduction}
Let $\K$ be an algebraically closed field, $\K^{\times}:=\K\backslash {\{0}\}$, and $H$   a finite-dimensional Hopf algebra over $\K$. Denote by  $\Rep(H)$ and $\Corep(H)$ the category of finite-dimensional $H$-modules and $H$-comudules,  respectively, by  $\mathbb{Z}$   the  set of integer numbers, $\Z_+$   the set of non-negative integers, $\Z_n:=\Z/n\Z$ for any positive integer $n$.

In 1987, Ulbrich \cite{Ul} showed  that isomorphism classes of right Galois objects of $H^*$ are in bijective correspondence with isomorphism classes of fiber functors of the finite tensor category $\Rep(H)$.  In practice, however, it is a challenging question to find all fiber functors. With the development of  tensor category theory, especially fusion category theory \cite{ENO1,ENO2,EGNO,O1}, one can instead study  rank one semisimple module categories over a fusion category $\mathcal{C}$, since semisimple module categories of rank one over  $\mathcal{C}$ correspond to fiber functors on $\mathcal{C}$.

In 2003, Ostrik \cite{O1} gave an abstract and complete description of the exact indecomposable module categories  $\mathcal{M}$ over a fusion category $\mathcal{C}$, that is, $\mathcal{M}$ is equivalent to the category $\mathcal{C}_A$ of right $A$-modules in $\mathcal{C}$ for some exact indecomposable algebra $A$ in $\mathcal{C}$. In particular, Ostrik \cite{O2} classified the exact indecomposable module categories over any group-theoretical fusion category except for an incomplete criterion of equivalence classes. Recently, the complete classification is given by Natale \cite{Na4}. As a byproduct,  fiber functors on a group-theoretical fusion category were determined in \cite[Corolllary\;3.4]{O2} and \cite[Theorem\;1.1]{Na4}.

Natale  showed that semisimple Hopf algebras $H$ obtained by abelian extensions are group-theoretical, and also characterized explicitly the categories $\Rep(H)$ \cite{Na3}. Motivated by the works mentioned above, the authors in \cite{CMNVW} determined   Galois objects and cocycle deformations of non-commutative  non-cocommutative semisimple Hopf algebras of dimension $p^3$ and $p^2q$ with $p,q$ distinct prime numbers, which can be obtained by abelian extensions \cite{Ma1,Na1,Na2,ENO2}. Masuoka has already studied Galois objects and cocycle deformations of four families of semisimple Hopf algebras denoted by $\widehat{\mathcal{D}}_{4n}$, $\widehat{\mathcal{T}}_{4m}$, $\mathcal{A}_{4m}$ and $\mathcal{B}_{4m}$ in a different way, see \cite{Ma2}. The results on Galois objects  can be recovered in principle by using the techniques in \cite{CMNVW}.

In this article, we study Galois objects and cocycle deformations of non-commutative  non-cocomutative semisimple Hopf algebras of dimension $16$, which were classified by Kashina \cite{Ka}. It seems possible to determine  Galois objects of them following the method of \cite{CMNVW}, as they are obtained by abelian extensions and hence are group-theoretical. The results on Galois objects are listed as follows:
\begin{mainthm}[Theorem\;\ref{thmAmain}]
Let $H$ be a semisimple Hopf algebra of dimension $16$ listed in \cite[Table\,1]{Ka}. Then the number of Galois objects of $H^*$ is given as follows:
\begin{enumerate}
\item  $(H_{C:\sigma_1})^*\cong H_{C:\sigma_1} $, $(H_{{c:\sigma_{1}}})^*$, $(H_{B:1})^*\cong H_{C:1}$ and $(H_{B:X})^*\cong H_E$ only have one  trivial Galois object.
\item  $(H_E)^*\cong H_{B:X}$ and $(H_{{c:\sigma_{0}}})^*$ have two Galois objects.
\item  $(H_{C:1})^*\cong H_{B:1}$ and $(H_{a:y})^*$ have  three Galois objects.
\item  $(H_{d:-1,1})^*\cong H_{d:-1,1}$ has four Galois objects.
\item $(H_{d:-1,-1})^*\cong H_{{c:\sigma_{1}}}$ and $(H_{d:1,-1})^*\cong H_{{b:1}}$  have   five Galois objects.
\item  $(H_{{b:x^2y}})^*$ and $(H_{b:y})^*$ have seven Galois objects.
\item $(H_{a:1})^*$ and $(H_{b:1})^*$ have one or two Galois objects, the number of Galois objects of $(H_{d:1,1})^*\cong H_{d:1,1}$ ranges from six to eight.
 \end{enumerate}
\end{mainthm}

If $A$ is a right $H$-Galois object, then up to isomorphism, there exists a unique Hopf algebra $L(A,H):=(A^{op}\otimes A)^{co H}$ such that $A$ becomes an $(L(A,H), H)$-biGalois object \cite{S1}. Moreover, the connection between the  cocycle deformations of Hopf algebras and Galois objects has been established by Schauenburg \cite{S1}. It is of interest to determine all   cocycle deformations of $H$, or equivalently to determine all Drinfeld twists of $H^*$.

In general, it is not easy to determine the Drinfeld twist equivalence classes of semisimple $($or generally quasi-$)$ Hopf algebras. In our cases,  the Frobenius-Schur indicator is a useful tool,  as it is invariant under gauge equivalences \cite{MN,NS1}.  The Frobenius-Schur indicator was first introduced by Linchenko and Montogmery \cite{LM} for Hopf algebras,   then studied extensively  by Kashina, Sommerh\"{a}uer and Zhu \cite{KSZ} for semisimple Hopf algebras, by Mason and Ng \cite{MN}, Ng and Schauenburg \cite{NS1} for semisimple quasi-Hopf algebras, and by Ng and Schauenburg for spherical fusion categories \cite{NS2}. In particular, we are able to obtain the following results mainly by  computing  higher Frobenius-Schur indicators.
\begin{mainthm}[Theorem \ref{thmBmain}]
The Hopf algebras $H_{B:1},~H_{B:X},~H_{d:1,1}$ have only one non-trivial cocycle deformation, and other Hopf algebras listed in \cite[Table\,1]{Ka} admit only trivial cocycle deformations.
\end{mainthm}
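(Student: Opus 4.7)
The plan is to combine Theorem A with two main ingredients: the Schauenburg correspondence relating Galois objects to cocycle deformations, and the invariance of higher Frobenius--Schur indicators under twists.

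First, by \cite{S1}, every right $H^{*}$-Galois object $A$ produces a Hopf algebra $L(A, H^{*}) = (A^{op} \otimes A)^{co H^{*}}$ for which $A$ is an $(L(A, H^{*}), H^{*})$-biGalois object, and every cocycle deformation of $H$ arises in this way (with the trivial Galois object $A = H^{*}$ returning $H$ itself). Because a cocycle deformation preserves the coalgebra structure on the appropriate side, any cocycle deformation of a non-cocommutative semisimple $16$-dimensional Hopf algebra $H$ remains non-cocommutative, and therefore falls into one of two cases: (i) it is again non-commutative and thus appears in Kashina's list \cite{Ka}, or (ii) it is commutative and hence isomorphic to a dual group algebra $k^{G}$ for some non-abelian group $G$ of order $16$. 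Thus Theorem A gives an upper bound on the number of candidate deformations for each $H$.

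Second, I compute the higher Frobenius--Schur indicator sequences $\{\nu_n(H)\}$ for every Hopf algebra in Kashina's list, using the formulas of \cite{KSZ}. Because these indicators are invariants of $\text{Rep}(H)$ as a tensor category \cite{MN, NS1}, they are preserved under cocycle deformation. A direct comparison shows that the sequences are pairwise distinct across the list, so no two distinct Hopf algebras in Kashina's list are cocycle deformations of each other. Combined with the dichotomy from the first step, any non-trivial cocycle deformation of an $H$ from the list must be a commutative Hopf algebra $k^{G}$.

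Third, I compute $L(A, H^{*})$ explicitly for each Galois object $A$ supplied by Theorem A, exploiting the presentation of $H$ as an abelian extension: since $H$ is group-theoretical, each Galois object of $H^{*}$ is specified by a pair $(K, \alpha)$ consisting of a subgroup $K$ of the underlying group and a compatible $2$-cochain $\alpha$, from which the multiplication on $L(A, H^{*})$ can be read off directly. For the twelve Hopf algebras outside $\{H_{B:1}, H_{B:X}, H_{d:1,1}\}$, every such $L$ is non-commutative, hence isomorphic to $H$ itself by the previous step, yielding only the trivial deformation. For each of the three exceptional Hopf algebras, exactly one Galois object produces a commutative $L \cong k^{G}$ for a specific non-abelian group $G$ of order $16$, yielding the unique non-trivial cocycle deformation in each case.

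The main obstacle will be the case-by-case computation: the higher Frobenius--Schur indicators of all sixteen Hopf algebras must be produced from their irreducible representations and iterated coproducts, and in the three exceptional cases one must pin down precisely which Galois object $(K, \alpha)$ yields the commutative deformation and identify the corresponding group $G$.
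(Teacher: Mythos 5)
Your overall strategy---separate the members of Kashina's list by gauge invariants, then use the Galois-object counts to control the remaining commutative candidates---matches the paper's, but two steps contain genuine problems as written. First, the invariance claim in your second step is dualized incorrectly: the higher Frobenius--Schur indicators of $\text{Rep}(H)$ are invariant under Drinfeld twists of $H$, equivalently under cocycle deformations of $H^{*}$, \emph{not} under cocycle deformations of $H$ (a cocycle deformation changes the algebra structure and hence $\text{Rep}(H)$ itself; e.g.\ $k^{D_8}$ and its deformation $H_8$ do not even have the same representation type). The argument is repairable because the list is closed under duality, so pairwise twist inequivalence of the whole list (Theorem \ref{ThmTwist}) is equivalent to pairwise failure of Morita--Takeuchi equivalence---but this bridge must be stated. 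Second, your assertion that ``a direct comparison shows that the indicator sequences are pairwise distinct across the list'' is unsupported and almost certainly false as stated: the paper needs the Grothendieck ring (also a gauge invariant, by Nikshych's theorem) to separate, e.g., $H_{a:1}$, $H_{b:1}$, $H_{B:X}$, and for the pair $H_{d:1,1}$, $H_{d:-1,1}$ it does not use indicators at all but a triangularity argument ($H_{d:1,1}\cong k[D_8\times\mathbb{Z}_2]^J$ is triangular while $H_{d:-1,1}\cong H_8\otimes k[\mathbb{Z}_2]$ is not). You would need to supply these additional invariants or actually exhibit distinguishing indicator values for every pair.

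Your third step takes a genuinely different and much heavier route than the paper. Instead of computing $L(A,H^{*})$ for each Galois object, the paper quotes the known twist presentations $H_E\cong k[G_2]^J$, $H_{C:1}\cong k[D_{16}]^J$, $H_{d:1,1}\cong k[D_8\times\mathbb{Z}_2]^J$, dualizes to realize $H_{B:X}$, $H_{B:1}$, $H_{d:1,1}$ as cocycle deformations of dual group algebras, and proves via indicators that no other member of the list is a twist of a group algebra (Theorem \ref{thmBmain1}); uniqueness then follows from the Galois-object count of Theorem \ref{thmAmian} for $H_{B:1}$ and $H_{B:X}$ (only two Galois objects each), and for $H_{d:1,1}$ (which has eight) from its being the unique non-trivial twist of $k[D_8\times\mathbb{Z}_2]$. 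Your direct computation of the $L(A,H^{*})$ would in principle also work, but beware that distinct Galois objects can yield isomorphic Hopf algebras $L$, so ``exactly one Galois object produces a commutative $L$'' is not quite the statement you need, and for $H_{d:1,1}$ you would have to examine all eight.
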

It should be pointed out that  the results related to $H_{B:1}$ and $H_{C:\sigma_1}$ in Theorems \ref{thmAmain} and \ref{thmBmain}  were first obtained by Masuoka \cite{Ma2},  as $\mathcal{A}_{16}\cong H_{B:1}$ and $\mathcal{B}_{16}\cong H_{C:\sigma_1}$.

The paper is organized as follows. In section \ref{secPrelinminaries}, we recall some basic notations and properties of group-theoretical fusion categories, Galois objects and cocycle deformations, abelian extensions and Frobenius-Schur indicators. In section \ref{secSemisimple16}, we compute all matched pairs of groups corresponding to the Hopf algebras listed in  \cite[Table 1]{Ka}, and then compute the number of   Galois objects of them. In section \ref{secCocycle}, we mainly use the Frobenius-Schur indicators to find all the  twist equivalence classes, or equivalently to determine all the cocycle deformations of them.
\section{Preliminaries}\label{secPrelinminaries}
We recall some basic knowledge of group-theoretical fusion categories, Galois objects and cocycle deformations, higher Frobenius-Schur indicators and abelian extensions of semisimple Hopf algebras.
\subsection{Group-theoretical fusion categories}
We follow \cite{ENO1,EGNO,O1,O2} (also \cite{CMNVW}) to introduce some necessary concepts and notations.
Let $\C$ be a fusion category over $\K$, $\M$ be an  indecomposable  exact left $\C$-module category, $\text{End}(\M)$ be the tensor category of endofuntors of $\M$ and $\C_{\M}^*$ be the fusion category of $\C$-module  functors of $\M$. The \emph{rank} of $\mathcal{M}$ is the number of isomorphism classes of simple objects of $\mathcal{M}$. Observe that there is a bijective correspondence between structures of a $\C$-module category on $\M$ and monoidal functors $F:\C\rightarrow \text{End}(\M)$. Then exact $\C$-module categories $\M$ of rank one correspond to fiber functors $F:\C\rightarrow  \text{Vec}\cong\text{End}(\M)$.

A fusion category $\C$ is \emph{pointed} if all the simple objects  of $\C$ are invertible, then the isomorphism classes of simple objects form a finite group $G$. Therefore  $\mathcal{C}$ is equivalent to $Vec^{\omega}_{G}$, the category of finite-dimensional $G$-graded vector spaces with the associativity isomorphism given by a normalized 3-cocycle $\omega\in Z^3(G,\K^{\times})$.

Let $\C$ and $\mathcal{D}$ be two fusion categories. $\C$ and $\mathcal{D}$ are \emph{categorically Morita equivalent} if there exists an exact indecomposable $\C$-module category $\M$ such that $\mathcal{D}^{\text{op}}\cong \C_{\M}^*$ as fusion categories.
\begin{defi} A fusion category is said to \emph{group-theoretical} if it is categorically Morita equivalent to a pointed fusion category $Vec^{\omega}_{G}$.
\end{defi}
The exact indecomposable module categories over a group-theoretical fusion category were classified by Ostrik \cite{O2}: Every exact indecomposable module category over the category $Vec^{\omega}_{G}$ is determined by a pair $(F, \alpha)$, where $F$ is a subgroup of $G$ such that the class of $\omega|_{F\times F\times F}$ is trivial in $H^3(F, \K^{\times})$ and $\alpha: F\times F\rightarrow \K^{\times} $ is a $2$-cochain on $F$ satisfying $d^2(\alpha)=
\omega|_{F\times F\times F}$. Denote by $\mathcal{C}(G, \omega, F,\alpha)$ the corresponding group-theoretical fusion category. Every exact indecomposable module category over $\mathcal{C}(G, \omega, F,\alpha)$ is determined by a pair $(L,\beta)$, where $L$ is a subgroup of $G$ and $\beta$ is a 2-cochain on $L$ such that $d^2(\beta)=\omega|_{L\times L\times L}$.

A $2$-cocycle $\alpha\in Z^2(G,\K^{\times})$ is said to be \emph{non-degenerate} if the twisted group algebra $\K_{\alpha}[G]$ is a simple  algebra. The following result plays a key role in \cite{CMNVW} to determine the Galois objects of semisimple Hopf algebras of dimension $p^3$ and $p^2q$.
\begin{thm}$($\cite[Corollary 3.4]{O2}, \cite[Theorem 1.1]{Na4}$)$ \label{fiberfuncondition} Fiber functors on $\mathcal{C}(G, \omega, F,\alpha)$
correspond to pairs $(L,\beta)$, where $L$ is a subgroup of $G$ and $\beta$ is a $2$-cocycle on $L$,
such that the following conditions are satisfied:
\begin{enumerate}
  \item The class of $\omega|_{ L\times L\times L}$ is trivial;
  \item $G = LF$;
  \item The class of the $2$-cocycle $\alpha|_{F\cap L}\beta^{-1}|_{L\cap F}$ is non-degenerate.
\end{enumerate}
Two fiber functors $(L,\beta)$, $(L{'},\beta{'})$ are isomorphic if and only if
there exists an element $g\in G$ such that $L{'}=gLg^{-1}$, and the cohomology class of
the $2$-cocycle $\beta^{-1}\beta'^g\Omega_g$ is trivial in $H^2(L, \K^{\times})$, where $\beta'^g(h,l):=\beta'(ghg^{-1},glg^{-1})$ for $h,l\in L$ and $\Omega_g(a,b):=\frac{\omega(gag^{-1},gbg^{-1},g)\omega(g,a,b)}{\omega(gag^{-1},g,b)}$ for $a,b\in G$.
\end{thm}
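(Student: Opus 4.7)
The plan is to reduce the statement to the classification of indecomposable exact module categories over $\mathcal{C}(G,\omega,F,\alpha)$ recalled just before the theorem, and then to single out those of rank one. A fiber functor on a fusion category $\mathcal{C}$ is the same datum as a rank-one indecomposable left $\mathcal{C}$-module category, so by that classification every such module category has the form $\mathcal{M}(L,\beta')$ for some subgroup $L\leq G$ and a 2-cochain $\beta'$ on $L$ satisfying $d\beta'=\omega|_{L\times L\times L}$. The existence of such a $\beta'$ is precisely condition (1); once we fix a trivialization $\mu$ of $\omega|_{L\times L\times L}$ and write $\beta'=\mu\beta$, the remaining freedom is a genuine 2-cocycle $\beta$ on $L$, which is the datum recorded in the theorem.

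Next I would compute the rank of $\mathcal{M}(L,\beta')$ as a $\mathcal{C}(G,\omega,F,\alpha)$-module category. The standard decomposition expresses this rank as a sum, indexed by $(F,L)$-double cosets $FgL\subseteq G$, of the number of isomorphism classes of simple modules over a twisted group algebra $k_{\gamma_g}(F\cap gLg^{-1})$, where $\gamma_g$ is assembled from the restriction of $\alpha$ to $F\cap gLg^{-1}$, the $g$-conjugate of $\beta^{-1}$, and the obstruction 2-cocycle $\Omega_g$ that appears in the last assertion of the theorem. Rank one therefore forces (i) a single $(F,L)$-double coset, which is exactly $G=LF$ (condition (2)), and (ii) the twisted group algebra $k_{\gamma_e}(F\cap L)$ to be a full matrix algebra, i.e., $\gamma_e$ non-degenerate. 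A short cocycle computation, taking $g=e$, identifies $\gamma_e$ up to a coboundary with $\alpha|_{F\cap L}\beta^{-1}|_{L\cap F}$, which is condition (3).

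For the second assertion I would transport the equivalence classification of indecomposable module categories directly: $\mathcal{M}(L,\beta)\simeq\mathcal{M}(L',\beta')$ exactly when some $g\in G$ conjugates $L$ to $L'$ and the cocycle $\beta'^{-1}\beta^{g}\Omega_g$ is cohomologically trivial on $L$, where $\Omega_g$ is the coboundary measuring the failure of $\omega$ to be invariant under conjugation by $g$. The main technical obstacle I anticipate is carefully bookkeeping the cocycle $\gamma_g$ and the chosen trivializations of $\omega|_{L}$ and $\omega|_{L'}$ through the rank computation and the equivalence comparison, so that the non-degeneracy criterion emerges in the clean symmetric form of condition (3) and the equivalence condition involves $\Omega_g$ in the precise form written, without residual coboundary ambiguity.
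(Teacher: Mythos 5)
Your outline is correct and follows exactly the route of the sources the paper cites for this statement ([O2] and [Na2]); the paper itself records the theorem without proof, so there is no internal argument to compare against. Reducing fiber functors to rank-one module categories, computing the rank as a sum over $(F,L)$-double cosets of counts of irreducibles of the twisted group algebras $k_{\gamma_g}(F\cap gLg^{-1})$, and reading off conditions (2) and (3) from ``one double coset'' and ``simple twisted group algebra,'' together with Natale's corrected equivalence criterion involving $\Omega_g$, is precisely the standard argument.
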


\begin{rmk}\label{equivalencefiber}From \cite[section 3.1]{Na4},  for any $g\in G$, we have $d^2(\Omega_g)=\frac{\omega}{\omega^g}$, where $\omega^g(a,b,c):=\omega(gag^{-1},gbg^{-1},gcg^{-1})$, $\forall a, b, c\in G$. If  the class $\omega|_{ L\times L\times L}$ is trivial and $L$ is normal, then $[\Omega_g|_{L\times L}]\in H^2(L,\K^{\times})$.
\end{rmk}
\subsection{Cocycle deformations and twist deformations}
\begin{defi} [\cite{EV}] Let $H$ be a Hopf algebra over $\K$. A twisting $H_{\Omega}$ of  $H$ is a Hopf algebra with the same algebra structure and counit $\varepsilon$,  with the coproduct and antipode given by
\begin{gather*}
\Delta_{\Omega}(h)=\Omega \Delta(h)\Omega^{-1},\quad S_{\Omega}(h)=uhu^{-1},\quad\forall h\in H,
\end{gather*}
 where $u\in H$ and $\Omega\in H\otimes H$ are invertible elements.
\end{defi}

$H_{\Omega}$ is well-defined if and only if $\Omega\in H\otimes H$ is a $2$-pseudo-cocycle, that is, $\partial_2(\Omega)$ lies in the centralizer of $(\Delta\otimes \text{id})\Delta(H)$ in $H\otimes H\otimes H$, where
\begin{gather*}
\partial_2(\Omega):=(id\otimes \Delta)(\Omega^{-1})(1\otimes \Omega^{-1}) (\Omega\otimes1)( \Delta\otimes id)(\Omega).
\end{gather*}
$\Omega$ is a $2$-cocycle if and only if  $\partial_2(\Omega)=1\otimes1\otimes1,(\varepsilon\otimes id)\Omega=(id\otimes\varepsilon)\Omega=1$. In this case, we call $\Omega$ a \emph{Drinfeld twist} of $H$ denoted by $J$ and $H^{J}:=H_{\Omega}$ is a \emph{(Drinfeld) twist  deformation} of $H$.

Let $H$, $H^{\prime}$ be two Hopf algebras. Then $H$ and $H^{\prime}$  are said to be \emph{twist equivalent}, if $H^{\prime}\cong H^J$ for some Drinfeld twist $J$ of $H$. Furthermore, if  $H$ and $H^{'}$ are finite-dimensional, then they are twist equivalent if and only if $\Rep(H)\cong \Rep(H^{\prime})$ as tensor categories \cite{S1}.

The properties like the semisimplicity or   Grothendieck rings of Hopf algebras are preserved under twisting deformations. Moreover,
\begin{thm}$($\cite[Theorem 4.1]{Ni}$)$ \label{twistings}Let $H$ and $L$ be Hopf algebras over $\K$. Denote by $Gr(H)$ and $Gr(L)$ the Grothendieck rings of $H$ and $L$, respectively. Then $Gr(H)\cong Gr(L)$ as fusion rings if and only if $H$ is a twisting of $L$.
\end{thm}

Now we introduce the dual concept of the Drinfeld twist. Let $H$ be a Hopf algebra and $\sigma:H\otimes H\rightarrow \K$ be a map. Then $\sigma$ is said to be a \emph{Hopf $2$-cocycle} of $H$, if it is a convolution invertible morphism satisfying
\begin{gather*}
\sigma(x,1)=\varepsilon(x)=\sigma(1,x),\quad
\sigma(x_{(1)},y_{(1)})\sigma(x_{(2)}y_{(2)},z)=\sigma(y_{(1)},z_{(1)})\sigma(x,y_{(2)}z_{(2)}),\quad\forall x,y,z\in H.
\end{gather*}

Given a $2$-cocycle $\sigma : H\otimes H\rightarrow \K$. Denote by $\sigma^{-1}$ the convolution inverse of $\sigma$. We can construct a new Hopf algebra $(H^{\sigma},m^{\sigma},1,\Delta,\epsilon, S^{\sigma})$, where $H^{\sigma}=H$ as coalgebras, and
\begin{gather*}
m^{\sigma}(x\otimes y)=\sigma(x_{(1)},y_{(1)})x_{(2)}y_{(2)}\sigma^{-1}(x_{(3)},y_{(3)}), \\
S^{\sigma}(x)=\sum\sigma(x_{(1)},S(x_{(2)}))S(x_{(3)})\sigma^{-1}(S(x_{(4)}),x_{(5)}),\quad\forall x,y\in H.
\end{gather*}
\begin{defi}
We call $H^{\sigma}$ the  cocycle deformation of $H$ by $\sigma$.
\end{defi}


A $2$-cocycle deformation $H^{\sigma}$ of $H$ by $\sigma$ is said to be  \emph{trivial} if $H^{\sigma}\cong H$ as Hopf algebras.
\begin{rmk}[\cite{S1}]
Given a Hopf $2$-cocycle $\sigma:H\otimes H\rightarrow \K$, there exists an algebra $H_{\sigma}$ with the multiplication given by:
\begin{gather*}
x\cdot_{\sigma}y=\sigma(x_{(1)},y_{(1)})x_{(2)}y_{(2)},\quad x,y\in H.
\end{gather*}
Moreover, $\Delta:H_{\sigma}\rightarrow H_{\sigma}\otimes H$ is an algebra map and then $H_{\sigma}$ becomes a right $H$-comodule algebra and $(H_{\sigma})^{\text{co}\;H}\cong\K$.
\end{rmk}
\subsection{Hopf Galois objects}
\begin{defi}
Let $H$, $L$ be Hopf algebras over $\K$. A right $H$-Hopf comodule algebra $A$ is a right $H$-Galois object if $A^{co H}=\K$ and the Hopf-Galois map $\beta:A\otimes A\rightarrow A\otimes H$, $\beta(a\otimes b)=ab_{(0)}\otimes b_{(1)}$, is bijective. A left $L$-Galois object is defined analogously. An $(L,H)$-biGalois object $A$ is a  right $H$-Galois object and a left $L$-Galois object, and the left $L$-comodule and right $H$-comodule morphisms are compatible.
\end{defi}
Given a right $H$-Galois object $A$, there is a Hopf algebra $L:=L(A,H)$ attached to the pair $(A,H)$ such that $A$ becomes an $(L,H)$-biGalois object \cite{S1}. The Hopf algebra $L(A,H)$ is unique up to isomorphism. A right $H$-Galois object $A$ is \emph{trivial} if $A\cong H$ as right $H$-comodule algebras. In this special case, $L\cong H$ as Hopf algebras.

\begin{rmk}
If $\sigma:H\otimes H\rightarrow \K$ is a Hopf $2$-cocycle, then the algebra $H_{\sigma}$ is an $(H^{\sigma},H)$-biGalois object.
\end{rmk}

Let $H$ and  $L$ be finite-dimensional Hopf algebras over $\K$. Then $H$ and $L$ are   \emph{Morita-Takeuchi equivalent}, if  $\Corep(H)\cong\Corep(L)$ as finite tensor categories.

\begin{thm}$($\cite[Corollary 5.7]{S1}$)$\label{thmS1}
Let $H,L$ be finite-dimensional Hopf algebras. Then $H$ and  $L$ are Morita-Takeuchi equivalent, if and only if  $H$ is a cocycle deformation of $L$, if and only if there exists a $(H,L)$-biGalois object $A$ such that $L\cong L(A,H)$.
\end{thm}
\begin{pro}$($\cite[Theorem 1.7]{Ul}$)$\label{Galoisfiber}
Let $H$ be a finite-dimensional Hopf algebra over $\K$. Then there is a bijective correspondence between the following sets:
\begin{enumerate}
  \item The set of right Galois objects of $H^*;$
  \item The set of fiber functors on $\Rep(H).$
\end{enumerate}
\end{pro}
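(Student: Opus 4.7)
The plan is to establish the correspondence via Tannakian/coend reconstruction, exploiting finite-dimensionality of $H$ to identify $\text{Rep}\,(H)$ with $\text{Corep}\,(H^{*})$ as finite tensor categories. I would construct the maps in both directions separately and then verify mutual inverseness on isomorphism classes.

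From a fiber functor $\omega:\text{Corep}\,(H^{*})\to \text{Vec}$ to a Galois object, I would form the coend
$$A_{\omega}\;=\;\int^{V\in \text{Corep}\,(H^{*})}\omega(V)^{*}\otimes V,$$
equipped with the right $H^{*}$-coaction coming from the $V$-variable and multiplication coming from the monoidal structure of $\omega$; for $\omega$ the forgetful functor one recovers $A_{\omega}=H^{*}$ with its regular coaction. Conversely, given a right $H^{*}$-Galois object $A$, I would define
$$\omega_{A}(V)\;:=\;(A\otimes V)^{\text{co}\,H^{*}}$$
with respect to the codiagonal coaction, and equip it with a monoidal constraint induced by the multiplication on $A$. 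The $H^{*}$-Galois condition on $A$ should force $\omega_{A}$ to be exact, faithful and strong monoidal.

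The key technical steps are: (i) showing $A_{\omega}$ is a well-defined right $H^{*}$-comodule algebra with $A_{\omega}^{\text{co}\,H^{*}}=k$; (ii) establishing bijectivity of the canonical Galois map $\beta_{A_{\omega}}:A_{\omega}\otimes A_{\omega}\to A_{\omega}\otimes H^{*}$; (iii) checking that $\omega_{A}$ is an exact faithful tensor functor into $\text{Vec}$, which uses bijectivity of $\beta_{A}$ in the inverse direction; (iv) constructing natural isomorphisms $A_{\omega_{A}}\cong A$ of $H^{*}$-comodule algebras and $\omega_{A_{\omega}}\cong\omega$ of tensor functors, so the two assignments descend to mutually inverse bijections on isomorphism classes.

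The hard part will be step (ii): translating the categorical content of $\omega$ (exactness plus the rank-one module-category property) into bijectivity of the algebraic Galois map. Concretely, one must identify $\beta_{A_{\omega}}$, expressed through the coend, with a natural comparison morphism between two presentations of the same bifunctor on $\text{Corep}\,(H^{*})\times\text{Corep}\,(H^{*})$, and then argue that the comparison is invertible because $\omega$ realises $\text{Vec}$ as a rank-one $\text{Corep}\,(H^{*})$-module category. Once (ii) is in place, faithful flatness of Galois extensions gives (iv) fairly formally, and the compatibility of isomorphism classes follows from naturality of the coend construction and of $(-\otimes -)^{\text{co}\,H^{*}}$ in the chosen data.
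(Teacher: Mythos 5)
The paper gives no proof of this proposition --- it is quoted directly from Ulbrich \cite{Ul} --- and your outline is exactly the standard reconstruction argument behind that reference: the coend $\int^{V}\omega(V)^{*}\otimes V$ in one direction, the cotensor-type functor $V\mapsto (A\otimes V)^{\mathrm{co}\,H^{*}}$ in the other, with the Galois condition supplying the monoidal constraint. Your plan is correct and takes essentially the same route as the cited source; the only caveat is that it remains a plan, and step (ii) together with the faithful-flatness input for (iii)--(iv) (available here because finite-dimensional Galois objects are cleft) still needs to be written out.
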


\subsection{Higher Frobenius-Schur indicators}
\begin{defi}[\cite{LM,KSZ}]
Let $H$ be a semisimple Hopf algebra, $\Lambda\in H$ be an integral such that $\varepsilon(\Lambda)=1$, $V$ be a finite-dimensional  representation of $H$, and $\chi$ be its character. For any non-negative integer $n$, the $n$-th Frobenius-Schur indicator $\nu_{n}(\chi)$ of $V$ is given by $$\nu_{n}(\chi):=\chi(\Lambda^{[n]}),$$ where $\Lambda^{[n]}:=\Lambda_{(1)}\cdots\Lambda_{(n)}$  (here $\Lambda^{[0]}:=1$) is the $n$-th Sweedler power of $\Lambda$.
\end{defi}
\begin{rmk}The Frobenius-Schur indicator is also well-defined for semisimple quasi-Hopf algebras and spherical fusion categories, see \cite{NS1,NS2} for details.
\end{rmk}

\begin{rmk}
 Let $H$ be a Hopf algebra fitting into an abelian extension : $\K\rightarrow \K^{\Gamma}\rightarrow H\rightarrow \K[F]\rightarrow \K$. Then the integral element $\Lambda$ of $H $ satisfying $\varepsilon(\Lambda)=1$ is given by
\begin{align*}
\Lambda=\frac{1}{\mid F\mid}\Sigma_{g\in F}\delta_1\#g,
\end{align*}
where $\delta_1$ is the orthogonal primitive idempotent satisfying $\langle \delta_1,g\rangle=\delta_{1,g}$, for any $g\in \Gamma$.
\end{rmk}

\begin{thm}$($\cite[Theorem 4.1]{MN}, \cite[Theorem 4.1]{NS1}$)$\label{gaugeinvariant}
The  Frobenius-Schur indicators are invariant under gauge transformations of semisimple (quasi)-Hopf algebras.
\end{thm}

The Frobenius-Schur indicator is a gauge invariant, so it is useful to distinguish whether two semisimple Hopf algebras are twist inequivalent.

\subsection{Abelian extensions of Hopf algebras}
We follow \cite{H,K,Ma3,Na3} to introduce some necessary notations and concepts on abelian extensions $($see also \cite[section 3]{CMNVW}$)$.
Let $F$, $\Gamma$ be finite groups. A \emph{matched pair} $(F,\Gamma,\lhd,\rhd)$ consists of two actions satisfying the following compatible conditions: $\forall s, t\in \Gamma$, $\forall x, y\in F$,
\begin{gather*}
s\rhd (xy) = (s \rhd x)((s \lhd x) \rhd y), \quad
(st)\lhd x = (s \lhd (t \rhd x))(t \lhd x).
\end{gather*}

Let $(F,\Gamma,\lhd,\rhd)$ be a matched pair of groups. The \emph{bicrossed product} $F\bowtie\Gamma$  of $F$ and $\Gamma$  is a group with unit $1 \bowtie 1$ and multiplication given by
\begin{gather*}
(x\bowtie s)(y\bowtie t) = x(s \rhd y)\bowtie(s \lhd y)t, \quad\forall x, y\in F, s, t\in\Gamma.
\end{gather*}

 Obviously, $F\times 1$ and $1\times\Gamma$ are subgroups of $F\bowtie\Gamma$ and isomorphic to $F$ and $\Gamma$, respectively.
If we do not distinguish the subgroups $F\times 1$, $1\times\Gamma$ from $F$, $\Gamma$,
then  $F\bowtie\Gamma$ admits an \emph{exact factorization} $F\bowtie\Gamma=F\Gamma$.

Conversely,
if a finite group $G$ admits an exact factorization $G=F{'}\Gamma{'}$, where $F{'}\cong F$ and $\Gamma{'}\cong\Gamma$ as groups, then any $g\in G$ can be written uniquely as $xs$, where $x\in F,~s\in \Gamma$.
Henceforth, the group multiplication implies $(xs)(yt)=(xy{'})(s{'}t)$, for unique $y{'}\in F,~s{'}\in \Gamma$,  which  gives two actions $\lhd:\Gamma\times F\rightarrow\Gamma$, $(s,y)\mapsto s{'}$, and $\rhd:\Gamma\times F\rightarrow F$ $(s,y)\mapsto y{'}$, such that $(F,\Gamma,\lhd,\rhd)$ is a matched pair. It is easy to see that these actions are determined by the relations
\begin{gather*}
sy =y{'}s{'}= (s \rhd y)(s \lhd y), \quad y \in F, s \in \Gamma.
\end{gather*}

Let $(F,\Gamma,\lhd,\rhd)$ be a matched pair of groups, ${\{e_s}\}_{s\in \Gamma}$ be the dual primitive orthogonal basis in $\K^{\Gamma}$.
The action $\lhd:\Gamma\times F\rightarrow\Gamma$ corresponds to an action  $\rightharpoonup:F\times \K^{\Gamma}\rightarrow \K^{\Gamma}$ so that
\begin{align*}
 a\rightharpoonup e_x=e_{x\lhd a^{-1}},\quad a\in F, x\in\Gamma.
\end{align*}
Let $\sigma:F\times F \rightarrow (\K^{\Gamma})^\times$ be a normalized $2$-cocycle of the group $F$ with coeffients in $(\K^{\Gamma})^\times$. We write $\sigma:=\sum_{g\in \Gamma}\sigma_{g}e_g$. Then
 \begin{gather*}
 \sigma_{s\lhd x}(y, z) \sigma_s(x, yz) = \sigma_s(xy, z) \sigma_s(x, y),\quad
 \sigma_1(x, y) = \sigma_s(x, 1) = \sigma_s(1, y) = 1,\quad \forall x, y, z\in F.
 \end{gather*}

 Dually, let ${\{\delta_x}\}_{x\in F}$ be the dual primitive orthogonal basis in $\K^{F}$; the action $\rhd:\Gamma\times F\rightarrow F$ corresponds to an action $\leftharpoonup:\K^F\times \Gamma\rightarrow \K^F$ so that
 \begin{align*}
  \delta_x\leftharpoonup a=\delta_{a^{-1}\rhd x},\quad a\in\Gamma,x\in F.
  \end{align*}
 Let $\tau=\sum_{x\in F}\tau_x\delta_x:\Gamma\times \Gamma\rightarrow (\K^{F})^{\times}$ be a normalized $2$-cocycle of $\Gamma$ with coefficients in $(\K^{F})^{\times}$. We write $\tau:=\sum_{g\in F}\tau_{g}e_g$. Then
 \begin{gather*}
 \tau_x(st, u) \tau_{u\rhd x}(s, t) = \tau_x(s, tu) \tau_x(t, u),\quad
 \tau_1(s, t) = \tau_x(s, 1) = \tau_x(1, t) = 1, \quad \forall s, t, u\in \Gamma.
 \end{gather*}

Let $\K^{\Gamma}{}^\tau\#_\sigma\K[F]$ denote the vector space $\K^{\Gamma}\otimes \K[F]$ with multiplication and comultiplication given by
\begin{gather*}
(e_g\#x)(e_h\#y) =\delta_{g\lhd x,h} \sigma_g(x, y)e_g\#xy,\\
\Delta(e_g\#x) =\sum_{st=g}\tau_x(s,t)e_s\#(t \rhd x)\otimes e_t\#x, \quad g, h \in \Gamma, x, y \in F.
\end{gather*}
  If $\sigma$ and $\tau$ obey some certain compatible conditions,
then $H:=\K^{\Gamma}{}^{\tau}\#{}_{\sigma}\K[F]$ is a semisimple Hopf algebra fitting into an exact sequence of Hopf algebras $\K\rightarrow \K^{\Gamma}\rightarrow H\rightarrow \K[F]\rightarrow \K$. We say $H$ is an $\textit{abelian
extension}$ associated to the matched pair $(F,\Gamma,\lhd,\rhd)$. Moreover, every Hopf algebra fitting into such an exact sequence can be described in this way. See e.g. \cite[Part 1]{Ma3} for details.

Given a matched pair $(F,\Gamma,\lhd,\rhd)$, denote by $\text{Opext}(\K^\Gamma,\K[F])$ the set of equivalence classes of abelian extensions $\K\rightarrow \K^{\Gamma}\rightarrow \K^{\Gamma}\leftidx{^\tau}{\#_{\sigma}\K[F]}\rightarrow \K[F]\rightarrow \K$, which is a finite group under the Baer product of extensions.
By  \cite[(3.14)]{K}, there is an exact sequence
\begin{align*}
& 0\rightarrow H^1(F\bowtie\Gamma,\K^{\times})\xrightarrow{res}H^1(F,\K^{\times})\oplus H^1(\Gamma,\K^{\times})
 \rightarrow Aut(\K^{\Gamma}\leftidx{^\tau}{\#_{\sigma}\K[F]})\\&\rightarrow H^2(F\bowtie\Gamma,\K^{\times})\xrightarrow{res}H^2(F,\K^{\times})\oplus H^2(\Gamma,\K^{\times})\rightarrow Opext(\K^\Gamma,\K[F])\\&
 \xrightarrow{\overline{\omega}}H^3(F\bowtie\Gamma,\K^{\times})\xrightarrow{res}
H^3(F,\K^{\times})\oplus H^3(\Gamma,\K^{\times})\rightarrow\cdots.
\end{align*}
In particular, the element $[\tau,\sigma]\in Opext(\K^\Gamma,\K[F])$ is mapped under $\overline{\omega}$ onto a 3-cocycle $\omega(\tau,\sigma)\in Z^3(F\bowtie\Gamma,\K^{\times})$, which is defined by:
\begin{align}
\omega(\tau,\sigma)(a,b,c)=\tau_{\pi(c)}(p(a)\lhd \pi(b),p(b))\sigma_{p(a)}(\pi(b),p(b)\rhd\pi(c)),\quad \forall a,b,c \in F\bowtie\Gamma,\label{eq3cocycle}
\end{align}
where $\pi: F\bowtie\Gamma\rightarrow F, g\bowtie h\mapsto g$ and $p: F\bowtie\Gamma\rightarrow\Gamma, g\bowtie h\mapsto h$ are projections.
\begin{thm}$($\cite[Theorem 1.3]{Na3}$)$
Let $(F,\Gamma,\lhd,\rhd)$ be a matched pair of groups and $H$ be a Hopf algebra fitting into an abelian extension $\K\rightarrow \K^{\Gamma}\rightarrow H\rightarrow \K[F]\rightarrow \K$
 associated to $(F,\Gamma,\lhd,\rhd)$. Then $H$ is group-theoretical and  $\Rep(H)\cong \C(F\bowtie\Gamma,\omega(\tau,\sigma),F,1)$ as fusion categories.
\end{thm}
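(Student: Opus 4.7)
The plan is to realize $\text{Rep}\,H$ explicitly as the category of $k[F]$-bimodule objects inside the pointed fusion category $\text{Vec}_G^{\omega(\tau,\sigma)}$, where $G:=F\bowtie\Gamma$. By definition of $\mathcal{C}(G,\omega(\tau,\sigma),F,1)$ this is exactly the statement of the theorem, and it simultaneously exhibits $H$ as group-theoretical.

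As a preliminary, I would check that $\omega(\tau,\sigma)|_{F\times F\times F}$ is cohomologically trivial, so that the constant $2$-cochain $1$ provides a legitimate trivialization. This is immediate from the explicit formula $\omega(\tau,\sigma)(a,b,c)=\tau_{\pi(c)}(p(a)\lhd\pi(b),p(b))\,\sigma_{p(a)}(\pi(b),p(b)\rhd\pi(c))$: for $a,b,c\in F$ one has $p(a)=p(b)=p(c)=1$, and both factors collapse to $1$ by normalization. Hence the algebra $A_F:=k[F]$ with trivial $2$-cochain is well-defined inside $\text{Vec}_G^{\omega(\tau,\sigma)}$, so $\mathcal{C}(G,\omega(\tau,\sigma),F,1)$ makes sense.

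The core step is to construct the tensor equivalence. Unpacking the structure of $H=k^\Gamma{}^\tau\#_\sigma kF$, an $H$-module $V$ is a $\Gamma$-graded vector space $V=\bigoplus_{s\in\Gamma}V_s$ together with an $F$-action which is $\sigma$-projective and satisfies $x\cdot V_s\subseteq V_{s\lhd x^{-1}}$. I would regrade $V$ by $G$ via $V_{xs}:=V_s$, read the $F$-action as a right $A_F$-action, and use the inclusion $F\hookrightarrow G$ together with the $G$-grading to supply a left $A_F$-action; this produces an $A_F$-bimodule in $\text{Vec}_G^{\omega(\tau,\sigma)}$. The construction is clearly functorial, faithful, and essentially surjective.

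The main obstacle is monoidal coherence: one must verify that the tensor constraints on the two sides are governed by the same $3$-cocycle. On the $H$-module side the associator is trivial, while on the bimodule side it is twisted by $\omega(\tau,\sigma)$. The verification is a direct but delicate cocycle bookkeeping exercise, combining the matched-pair identities $s\rhd(xy)=(s\rhd x)((s\lhd x)\rhd y)$ and $(st)\lhd x=(s\lhd(t\rhd x))(t\lhd x)$ with the Kac compatibility between $\tau$ and $\sigma$; it is engineered precisely so that the resulting associator is the cocycle $\omega(\tau,\sigma)$ restricted to the relevant triples in $G$. Once the coherence is in place, comparison of Frobenius--Perron dimensions ($|F||\Gamma|=\dim H$ on both sides) promotes the fully faithful tensor functor to an equivalence, yielding $\text{Rep}\,H\cong\mathcal{C}(G,\omega(\tau,\sigma),F,1)$.
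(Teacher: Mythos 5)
The paper does not prove this statement at all: it is quoted verbatim as a known result of Natale \cite{Na1}, so there is no in-paper argument to compare yours against. Judged on its own terms, your outline reconstructs the standard proof correctly. The preliminary check is right: for $a,b,c\in F$ one has $p(a)=p(b)=p(c)=1$, so $\omega(\tau,\sigma)(a,b,c)=\tau_{\pi(c)}(1,1)\,\sigma_{1}(\pi(b),\pi(c))=1$ by the normalization conditions, i.e.\ $\omega$ is identically $1$ on $F$, not merely cohomologically trivial. The identification of an $H$-module with a $\Gamma$-graded space carrying a $\sigma$-projective $F$-action shifting degrees by $s\mapsto s\lhd x^{-1}$ follows directly from the multiplication rule $(e_g\#x)(e_h\#y)=\delta_{g\lhd x,h}\sigma_g(x,y)e_g\#xy$, and the regrading $V_{xs}:=V_s$ together with the free left $F$-translation does produce a $k[F]$-bimodule in $\mathrm{Vec}_G^{\omega}$; left-module freeness also gives essential surjectivity, since $F$ acts freely on $G$ by left translation.

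The one substantive reservation is that the step you call ``monoidal coherence'' is where essentially \emph{all} of the mathematical content of the theorem lives, and you assert rather than perform it. The assertion is true --- for instance, for $s,t\in\Gamma$ and $x\in F$ the formula gives $\omega(\tau,\sigma)(s,t,x)=\tau_x(s,t)$, which is exactly the associator needed to match the $\tau$-twisted comultiplication $\Delta(e_g\#x)=\sum_{st=g}\tau_x(s,t)e_s\#(t\rhd x)\otimes e_t\#x$ against the constraint $(M_s\otimes N_t)\otimes k[F]_x\to M_s\otimes(N_t\otimes k[F]_x)$ in $\mathrm{Vec}_G^{\omega}$, and similarly $\omega(xs,y,y')=\sigma_s(y,y')$ accounts for the $\sigma$-projectivity of the right action --- but a complete proof must carry out this bookkeeping for general triples, and that computation is precisely the content of the Kac-sequence map $[\tau,\sigma]\mapsto\omega(\tau,\sigma)$. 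Also, two minor points: the right $A_F$-action is the $H$-action of $y^{-1}$ (or one must pass through the antipode), since $y$ sends $V_s$ to $V_{s\lhd y^{-1}}$ while the grading demands $M_{xs}\cdot y\subseteq M_{x(s\rhd y)(s\lhd y)}$; and once full faithfulness and essential surjectivity are in hand the Frobenius--Perron dimension count at the end is redundant.
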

\section{Galois objects of semisimple Hopf algebras of dimension $16$}\label{secSemisimple16}
We study Galois objects of non-commutative and non-cocommutative  semisimple Hopf algebras of dimension $16$ listed in \cite[Table\,1]{Ka}.

\subsection{Singer pairs and matched pairs}\label{subsectionMatchedpair}
Let $H$ be a semisimple Hopf algebra of dimension $16$ in \cite[Table\,1]{Ka}. Kashina \cite{Ka} showed that $H$ fits into a cocentral abelian exact sequence of Hopf algebras
\begin{gather*}
\K^{\Gamma}\hookrightarrow H\twoheadrightarrow \K[F],
\end{gather*}
where $F:= \langle t\rangle\cong \Z_2$ and $\Gamma$ is a group of order $8$.

Moreover, $H$ is a bicrossed product $\K^{\Gamma}{}^{\rho,\theta}\#_{\rightharpoonup,\sigma}\K[F]$ with a non-trivial action $\rightharpoonup:\K[F]\otimes \K^{\Gamma}\rightarrow \K^{\Gamma}$ and a trivial coaction $\rho:\K[F]\rightarrow \K[F]\otimes \K^{\Gamma}$,  a Singer $2$-cocycle $\sigma:\K[F]\otimes \K[F]\rightarrow \K^{\Gamma}$ and a dual Singer $2$-cocycle $\theta:\K[F]\rightarrow \K^{\Gamma}\otimes \K^{\Gamma}$. The pair $(\K[F], \K^{\Gamma},\rightharpoonup,\rho)$ is called a Singer pair.

There is a bijective correspondence between the Singer pair $(\K[F], \K^{\Gamma},\rightharpoonup,\rho)$ and the matched pair $(F,\Gamma, \lhd,\rhd)$.
In  our cases, the trivial coaction $\rho:\K[F]\rightarrow \K[F]\otimes \K^{\Gamma}$ implies that $\rhd:\Gamma\otimes F\rightarrow \Gamma$ is trivial and the module action $\rightharpoonup: F\otimes \K^{\Gamma}\rightarrow \K^{\Gamma}$ corresponds to the action $\lhd:\Gamma\otimes F\rightarrow F$ so that
\begin{align}
x\rightharpoonup\delta_{s}=\delta_{{s\lhd x^{-1}}},\quad\forall x\in F,~s\in \Gamma.
\end{align}

Given a Singer $2$-cocycle $\sigma:\K[F]\otimes \K[F]\rightarrow \K^{\Gamma}$ and a dual Singer $2$-cocycle $\theta:\K[F]\rightarrow \K^{\Gamma}\otimes \K^{\Gamma}$, the corresponding   element $[\tau,\sigma]\in \text{Opext}(\K^\Gamma,\K[F])$ is given by
\begin{align}
\tau_f(g,h)=\theta(f)(g,h),\quad \forall g, h\in \Gamma, ~f\in F,\\
\sigma_s(x,y)=\sigma(x,y)(s),\quad \forall s\in\Gamma,~x,y\in F.
\end{align}

Let $H$ be a Hopf algebra listed in \cite[Table 1]{Ka}. Then $H\cong \K^{\Gamma}{}^{\rho,\theta}\#_{\rightharpoonup,\sigma}\K[F]$ for some Singer pair $(\K[F], \K^{\Gamma},\rightharpoonup,\rho)$, Singer $2$-cocycle $\sigma$ and dual Singer $2$-cocycle $\theta$. We shall describe the bicrossed product $F\bowtie\Gamma$ and  the $3$-cocycle $\omega(\sigma,\tau)\in Z^3(F\bowtie\Gamma,\K^{\times})$. We proceed this  case by case  following  the classification of Kashina.

\subsection*{Case \uppercase\expandafter{\romannumeral1}: $\Gamma=\mathbb{Z}_{4}\times \mathbb{Z}_{2}=\langle x\rangle\times \langle y\rangle$.}

Let $\{e_{p,q}\}_{\substack{0\leq p\leq3,0\leq q\leq1}}$ be the dual basis in $\K^{\Gamma}$, that is, $\langle e_{p,q},x^{i}y^{j} \rangle=\delta_{p,i}\delta_{q, j}$ for $0\leq i< 4,0\leq j< 2$. In this case, $\theta(t)=\sum_{ijpq}(-1)^{jp}e_{ij}\otimes e_{pq}$. Then $$\tau_{{t^s}}(x^{i}y^{j},x^{k}y^{l})=(-1)^{jks}.$$ Indeed,
$\tau(x^{r}y^{s},x^{k}y^{l})(t)=\theta(t)(x^{r}y^{s},x^{k}y^{l})=\sum_{ijpq}(-1)^{jp}\langle e_{i,j}\otimes e_{p,q},x^{r}y^{s}\otimes x^{k}y^{l}\rangle=(-1)^{sk}$.

\textbf{Case (a)}: The action is given by $t\rightharpoonup e_{i,j}=e_{i+2j,j}$ and $\sigma(t,t)=\sum_{p,q}(-1)^{kp+lq}e_{p,q}$ for $k,l=0,1$. In this case, the Singer pair forms the Hopf algebra $H_{k,l}$.

We claim that $F\bowtie\Gamma$ is isomorphic to
      \begin{align*}
      D_8*\mathbb{Z}_4=G_7:=\langle x,y,t\mid x^4=y^2=t^2=1,[x,y]=[x,t]=1, [y,t]=x^2\rangle.
      \end{align*}
Indeed,   $x\rhd t=y\rhd t=t$, and $x\lhd t=x,~y\lhd t=x^{2}y$ since the action $\rhd$ is trivial and
\begin{align*}
\langle x\lhd t, e_{i,j}\rangle=\langle x, t\rightharpoonup e_{i,j}\rangle=\langle x, e_{i+2j,j}\rangle=\delta_{1,i+2j}\delta_{0,j},\\
\langle y\lhd t, e_{ij}\rangle=\langle y, t\rightharpoonup e_{ij}\rangle=\langle y, e_{i+2j,j}\rangle=\delta_{0,i+2j}\delta_{1,j}.
\end{align*}
Then the claim follows by the following relations
\begin{align*}
(1\bowtie x) (t\bowtie1)= t\bowtie x,\quad (1\bowtie y)(t\bowtie1)=(y\rhd t)\bowtie(y\lhd t)=t\bowtie x^{2}y.
\end{align*}

Now we calculate $\omega(\sigma,\tau)\in Z^3(F\bowtie\Gamma,\K^{\times})$.
\begin{enumerate}
\item $H_{a:1}:=H_{0,0}$. Then  $\sigma$ is trivial, that is, $\sigma_{{x^iy^j}}(t^r,t^s)=1$. Let
 $a=t^qx^iy^j$,  $b=t^rx^ky^l$ and $c=t^sx^my^n$ for some $q,r,s,j,l,n\in\Z_2,i,k,m\in\Z_4$. By the formula \eqref{eq3cocycle}, we have
\begin{align*}
&\omega(\tau,\sigma)(a,b,c)=\tau_{\pi(c)}(p(a)\lhd \pi(b),p(b))\sigma_{p(a)}(\pi(b),p(b)\rhd\pi(c))\\
&=\tau_{t^s}((x^i\lhd t^r)(y^j\lhd t^r),x^ky^l)
=\tau_{t^s}(x^{{i+2rj}} y^j,x^ky^l)=(-1)^{jks}.
\end{align*}

\item $H_{a:y}:=H_{0,1}$. Since $\sigma(t,t)=\Sigma_{p,q}(-1)^{q}e_{p,q}$, it follows that $\sigma_{{x^iy^j}}(t^r,t^s)=(-1)^{jrs}$. Then
  \begin{align*}
&\omega(\tau,\sigma)(a,b,c)=\tau_{\pi(c)}(p(a)\lhd \pi(b),p(b))\sigma_{p(a)}(\pi(b),p(b)\rhd\pi(c))\\
&=\tau_{t^s}( (x^i\lhd t^r)(y^j\lhd t^r),x^ky^l)\sigma_{{x^iy^j}}(t^r,t^s)
=\tau_{t^s}(x^{{i+2rj}} y^j,x^ky^l)(-1)^{jrs}\\
&=(-1)^{jks}(-1)^{jrs}=(-1)^{j(k+r)s}.
\end{align*}
\end{enumerate}
\textbf{Case (b):} The action is given by $t\rightharpoonup e_{ij}=e_{-i,j}$ and $\sigma(t,t)=\sum_{p,q}(-1)^{kp+lq}e_{p,q}$ for $0\leq k,l\leq1$. In this case, the Singer pair forms the Hopf algebra $H_{k,l}$.

We claim that $F\bowtie\Gamma$ is isomorphic to
\begin{align*}
      D_8\times \mathbb{Z}_2= G_8:=\langle x,t,y\mid x^4=t^2=y^2=1,tx=x^{-1}t,[x,y]=[t,y]=1 \rangle.
      \end{align*}
Indeed, $x\rhd t=y\rhd t=t$, $x\lhd t=x^{-1}$ and $y\lhd t=y$ since the action $\rhd$ is trivial and
\begin{align*}
\langle x\lhd t, e_{ij}\rangle=\langle x, t\rightharpoonup e_{ij}\rangle=\langle x, e_{-i,j}\rangle=\delta_{1,-i}\delta_{0,j},\\
\langle y\lhd t, e_{ij}\rangle=\langle y, t\rightharpoonup e_{ij}\rangle=\langle y, e_{-i,j}\rangle=\delta_{0,-i}\delta_{1,j}.
\end{align*}
Then the claim follows by the following relations
\begin{align*}
(1\bowtie x) (t\bowtie1)=(x\rhd t)\bowtie(x\lhd t)=t\bowtie x^{-1},\quad(1\bowtie y)(t\bowtie1)= t\bowtie y.
\end{align*}

Now we calculate $\omega(\sigma,\tau)\in Z^3(F\bowtie\Gamma,\K^{\times})$.
\begin{enumerate}
\item $H_{b:1}:=H_{0,0}\cong H_{1,0}$. Then   $\sigma$ is trivial. Let $a=t^qx^iy^j$, $b=t^rx^ky^l$ and $c=t^sx^my^n$. Then
\begin{align*}
&\omega(\tau,\sigma)(a,b,c)=\tau_{\pi(c)}(p(a)\lhd \pi(b),p(b))\sigma_{p(a)}(\pi(b),p(b)\rhd\pi(c))\\
&=\tau_{t^s}((x^i\lhd t^r)(y^j\lhd t^r),x^ky^l)
=\tau_{t^s}(x^{{(-1)^{r}i}} y^j,x^ky^l) =(-1)^{jks}.
\end{align*}
        \item $H_{b:y}:=H_{0,1}$. Since $\sigma(t,t)=\sum_{p,q}(-1)^qe_{p,q}$,  it follows that $\sigma_{{x^iy^j}}(t^r,t^s)=(-1)^{jrs}$. Then
\begin{align*}
&\omega(\tau,\sigma)(a,b,c)=\tau_{\pi(c)}(p(a)\lhd \pi(b),p(b))\sigma_{p(a)}(\pi(b),p(b)\rhd\pi(c))\\
&=\tau_{t^s}((x^i\lhd t^r)(y^j\lhd t^r),x^ky^l) \sigma_{{x^iy^j}}(t^r,t^s)=\tau_{t^s}(x^{{(-1)^{r}i}} y^j,x^ky^l) (-1)^{jrs}\\
&=(-1)^{jks}(-1)^{jrs}=(-1)^{j(k+r)s}.
\end{align*}
\item $H_{b:x^2y}:=H_{1,1}$.
Since $\sigma(t,t)=\sum_{p,q}(-1)^{p+q}e_{p,q}$, it follows $\sigma_{{x^iy^j}}(t^r,t^s)=(-1)^{(i+j)rs}$. Then
\begin{align*}
&\omega(\tau,\sigma)(a,b,c)=\tau_{\pi(c)}(p(a)\lhd \pi(b),p(b))\sigma_{p(a)}(\pi(b),p(b)\rhd\pi(c))\\
&=\tau_{t^s}((x^i\lhd t^r)(y^j\lhd t^r),x^ky^l) \sigma_{{x^iy^j}}(t^r,t^s)=\tau_{t^s}(x^{{(-1)^{r}i}} y^j,x^ky^l) (-1)^{(i+j)rs}\\
&=(-1)^{jks}(-1)^{(i+j)rs}=(-1)^{(jk+jr+ir)s}.
\end{align*}
\end{enumerate}

\textbf{Case (c):} The action is given by $t\rightharpoonup e_{i,j}=e_{i,i+j}$ and $\sigma(t,t)=\sum_{p,q}(-1)^{{\frac{p(p-1)}{2}}}\theta^{kp}e_{p,q}$ for $\theta$ a primitive $4$-th root of unity. The Singer pair forms the Hopf algebra $H_k$ for $0\leq k\leq1$.

We claim that $F\bowtie\Gamma$ is isomorphic to
\begin{align*}
      G_6:=\langle x,t,y\mid x^4=t^2=y^2=1, [y,x]=[y,t]=1, [x,t]=y\rangle.
\end{align*}
Indeed, $x\rhd t=y\rhd t=t$, $x\lhd t=xy$ and $y\lhd t=y$, since the action $\rhd$ is trivial and
\begin{align*}
\langle x\lhd t, e_{i,j}\rangle=\langle x, t\rightharpoonup e_{i,j}\rangle=\langle x, e_{i,i+j}\rangle=\delta_{1,i}\delta_{0,i+j},\\
\langle y\lhd t, e_{i,j}\rangle=\langle y, t\rightharpoonup e_{ij}\rangle=\langle y, e_{i,i+j}\rangle=\delta_{0,i}\delta_{1,i+j}.
\end{align*}
 Then the claim follows by the following relations
\begin{align*}
(1\bowtie x) (t\bowtie1) =(x\rhd t) \bowtie (x\lhd t)=t\bowtie xy,\quad (1\bowtie y)(t\bowtie1)= t\bowtie y.
 \end{align*}
Now we calculate $\omega(\sigma,\tau)\in H^3(F\bowtie\Gamma,\K^{\times})$.
\begin{enumerate}
\item $H_{c:\sigma_0}:=H_0$. Since $\sigma(t,t)=\sum_{p,q}(-1)^{{\frac{p(p-1)}{2}}}e_{p,q}$,  it follows that $\sigma_{{x^iy^j}}(t^r,t^s)=(-1)^{{\frac{i(i-1)}{2}rs}}$. Then
\begin{align*}
&\omega(\tau,\sigma)(a,b,c)=\tau_{\pi(c)}(p(a)\lhd \pi(b),p(b))\sigma_{p(a)}(\pi(b),p(b)\rhd\pi(c))\\
&=\tau_{t^s}((x^i\lhd t^r)(y^j\lhd t^r),x^ky^l)\sigma_{{x^iy^j}}(t^r,t^s)\\
&=\tau_{t^s}(x^iy^{ri+j},x^ky^l) (-1)^{{\frac{i(i-1)}{2}rs}}\\
&=(-1)^{(ri+j)ks}(-1)^{{\frac{i(i-1)}{2}}rs}=(-1)^{{\frac{2(ri+j)ks+i(i-1)rs}{2}}}.
\end{align*}

\item $H_{c:\sigma_1}:=H_1$. Since $\sigma(t,t)=\sum_{p,q}(-1)^{{\frac{p(p-1)}{2}}}\theta^pe_{p,q}$, it follows that
$\sigma_{{x^iy^j}}(t^r,t^s)=(-1)^{{\frac{i(i-1)}{2}}rs}\theta^{irs}$. Then
 \begin{align*}
&\omega(\tau,\sigma)(a,b,c)=\tau_{\pi(c)}(p(a)\lhd \pi(b),p(b))\sigma_{p(a)}(\pi(b),p(b)\rhd\pi(c))\\
&=\tau_{t^s}((x^i\lhd t^r)(y^j\lhd t^r),x^ky^l)\sigma_{{x^iy^j}}(t^r,t^s)\\
&=\tau_{t^s}(x^iy^{ri+j},x^ky^l)(-1)^{{\frac{i(i-1)}{2}}rs}\theta^{irs}\\
&=(-1)^{(ri+j)ks}(-1)^{{\frac{i(i-1)}{2}}rs}\theta^{irs}=(-1)^{{\frac{2(ri+j)ks+i(i-1)rs}{2}}}\theta^{irs}.
\end{align*}
      \end{enumerate}

\subsection*{Case \uppercase\expandafter{\romannumeral2}: $\Gamma=\mathbb{Z}_{2}\times \mathbb{Z}_{2}\times \mathbb{Z}_{2}=\langle x\rangle\times \langle y\rangle\times\langle z\rangle$.}
Let $\{e_{p,q,r}\}_{0\leq p,q,r\leq 1}$ be the dual basis in $\K^{\Gamma}$, that is, $\langle e_{p,q,r},x^iy^jz^k\rangle=\delta_{p,i}\delta_{q,j}\delta_{r,k}$ for $0\leq i, j,k< 2$. In this case,
\begin{align*}
t\rightharpoonup e_{i,j,k}=e_{j,i,k},\quad \theta(t)=\sum_{ijkpqr}(-1)^{k(p+q)}\xi^{jp}_1 e_{i,j,k}\otimes e_{p,q,r}, \quad \sigma(t,t)=\sum\xi_1^{pq}e_{p,q,r}\sum\iota^{r}e_{p,q,r},
\end{align*}
where $\xi_1,\iota=0,1$. The Singer pair forms the  Hopf algebra $H_{d:\xi_1,\iota}$. Furthermore,
\begin{align*}
&\tau_{{t^s}}(x^{f}y^{g}z^{h},x^{l}y^{m}z^{n})=\tau(x^{f}y^{g}z^{h},x^{l}y^{m}z^{n})(t^s)
=\theta(t^s)(x^{f}y^{g}z^{h},x^{l}y^{m}z^{n})\\
&=\sum_{ijkpqr}(-1)^{ks(p+q)}\xi^{jps}_1\langle e_{i,j,k}\otimes e_{p,q,r},x^{f}y^{g}z^{h}\otimes x^{l}y^{m}z^{n}\rangle\\
&=(-1)^{sh(l+m)}\xi^{gls}_1.
\end{align*}

We claim that $F\bowtie\Gamma$ is isomorphic to
\begin{align*}
      D_8\times \mathbb{Z}_2 =G_8:=\langle xt,y,z\mid (xt)^4=y^2=z^2=1,y(xt)y=(xt)^3,[xt,z]=[y,z]=1\rangle.
\end{align*}
Indeed, $x\rhd t=y\rhd t=z\rhd t=t$, $x\lhd t=y$, $y\lhd t=x$,  since $\rhd$ is trivial and
\begin{align*}
\langle x\lhd t, e_{i,j,k}\rangle=\langle x, t\rightharpoonup e_{i,j,k}\rangle=\langle x, e_{j,i,k}\rangle=\delta_{1,j}\delta_{0,i}\delta_{0,k},\\
\langle y\lhd t, e_{i,j,k}\rangle=\langle y, t\rightharpoonup e_{i,j,k}\rangle=\langle y, e_{j,i,k}\rangle=\delta_{0,j}\delta_{1,i}\delta_{0,k}.
\end{align*}
Similarly, we have $z\lhd t=z$. Then the claim follows by the following relations
\begin{gather*}
(1\bowtie x)(t\bowtie 1)=(x\rhd t)\bowtie (x\lhd t)=t\bowtie y,\quad (1\bowtie y)(t\bowtie1)=t\bowtie x,\\(1\bowtie z)(t\bowtie1)=(z\rhd t)\bowtie(z\lhd t)=t\bowtie z.
\end{gather*}

Now we calculate $\omega(\sigma,\tau)\in Z^3(F\bowtie\Gamma,\K^{\times})$.
\begin{enumerate}
\item $H_{d:1,1}$. Then $\sigma(t,t)=\sum e_{p,q,r}\sum e_{p,q,r}$ is trivial, that is, $\sigma_{x^iy^jz^l}(t^r,t^s)=1$.
Let $a=t^px^iy^jz^k$, $b=t^qx^ly^mz^n$ and $c=t^rx^fy^gz^h$. Then
\begin{align*}
&\omega(\tau,\sigma)(a,b,c)=\tau_{\pi(c)}(p(a)\lhd \pi(b),p(b))\sigma_{p(a)}(\pi(b),p(b)\rhd\pi(c))\\
&=\tau_{t^r}((x^i\lhd t^q)(y^j\lhd t^q)z^k,x^ly^mz^n)\\
&=\tau_{t^r}(x^{{\frac{(i+j)+(-1)^{q}(i-j)}{2}}}y^{{\frac{(i+j)+(-1)^{q}(j-i)}{2}}}z^k,x^ly^mz^n)\\
&=(-1)^{kr(l+m)}.
\end{align*}
        \item $H_{d:1,-1}$. Since  $\sigma(t,t)=\sum e_{p,q,r}\sum(-1)^{r}e_{p,q,r}$,
it follows that  $\sigma_{x^iy^jz^l}(t^r,t^s)=(-1)^{rsl}$. Then
\begin{align*}
&\omega(\tau,\sigma)(a,b,c)=\tau_{\pi(c)}(p(a)\lhd \pi(b),p(b))\sigma_{p(a)}(\pi(b),p(b)\rhd\pi(c))\\
&=\tau_{t^r}((x^i\lhd t^q)(y^j\lhd t^q)z^k,x^ly^mz^n)\sigma_{x^iy^jz^l}(t^q,t^r)\\
&=\tau_{t^r}(x^{{\frac{(i+j)+(-1)^{q}(i-j)}{2}}}y^{{\frac{(i+j)+(-1)^{q}(j-i)}{2}}}z^k,
x^ly^mz^n)\sigma_{x^iy^jz^l}(t^q,t^r)\\
&=(-1)^{kr(l+m)}(-1)^{qrl}=(-1)^{(kl+km+ql)r}.
\end{align*}
        \item $H_{d:-1,1}$. Since  $\sigma(t,t)=\sum(-1)^{pq}e_{p,q,r}\sum e_{p,q,r}$, it follows that
   $\sigma_{x^iy^jz^l}(t^r,t^s)=(-1)^{rsij}$. Then
\begin{align*}
&\omega(\tau,\sigma)(a,b,c)=\tau_{\pi(c)}(p(a)\lhd \pi(b),p(b))\sigma_{p(a)}(\pi(b),p(b)\rhd\pi(c))\\
&=\tau_{t^r}((x^i\lhd t^q)(y^j\lhd t^q)z^k,x^ly^mz^n)\sigma_{x^iy^jz^l}(t^q,t^r)\\
&=\tau_{t^r}(x^{{\frac{(i+j)+(-1)^{q}(i-j)}{2}}}y^{{\frac{(i+j)+(-1)^{q}(j-i)}{2}}}z^k,x^ly^mz^n)\sigma_{x^iy^jz^l}(t^q,t^r)\\
&=(-1)^{kr(l+m)+\frac{(i+j)+(-1)^{q}(j-i)}{2}lr}(-1)^{qrij}.
\end{align*}
        \item $H_{d:-1,-1}$. Since  $\sigma(t,t)=\sum(-1)^{pq}e_{p,q,r}\sum(-1)^{r}e_{p,q,r}$, it follows that
  $\sigma_{x^iy^jz^l}(t^r,t^s)=(-1)^{rs(ij+l)}$. Then
\begin{align*}
&\omega(\tau,\sigma)(a,b,c)=\tau_{\pi(c)}(p(a)\lhd \pi(b),p(b))\sigma_{p(a)}(\pi(b),p(b)\rhd\pi(c))\\
&=\tau_{t^r}((x^i\lhd t^q)(y^j\lhd t^q)z^k,x^ly^mz^n)\sigma_{x^iy^jz^l}(t^q,t^r)\\
&=\tau_{t^r}(x^{{\frac{(i+j)+(-1)^{q}(i-j)}{2}}}y^{{\frac{(i+j)+(-1)^{q}(j-i)}{2}}}z^k,x^ly^mz^n) \sigma_{x^iy^jz^l}(t^q,t^r)\\
&=(-1)^{kr(l+m)+\frac{(i+j)+(-1)^{q}(j-i)}{2}lr}(-1)^{qr(ij+l)}.
\end{align*}
   \end{enumerate}

\subsection*{Case \uppercase\expandafter{\romannumeral3}: $\Gamma=D_{8}=\langle x, y\mid x^{4}=y^{2}=1,yx=x^{-1}y\rangle$.}
Let $\{e_{p,q}\}_{\substack{0\leq p\leq3,\,0\leq q\leq1}}$ be the dual basis in $\K^{\Gamma}$, that is, $\langle e_{p,q},x^{i}y^{j} \rangle=\delta_{p,i}\delta_{q, j}$. Then
$X=\sum_{pq}(-1)^{p}e_{p,q}$, $Y=\sum_{pq}(-1)^{q}e_{p,q}$ are group-like elements of order $2$ and $\theta(t)=\sum_{ijpq}\xi^{jp}e_{ij}\otimes e_{pq}$, where $\xi^4=1$. Furthermore, we have
 $$\tau_{{t^s}}(x^iy^j,x^ky^l)=\xi^{{jks}}.$$

 \textbf{Case (B)}: The action is given by $t\rightharpoonup e_{p,q}=e_{-p,q}$ and $\sigma(t,t)=\sum(-1)^{kp+lq}e_{p,q}=X^{k}Y^{l}, 0\leq k,l \leq 1$.  The Singer pair forms the Hopf algebra  $H_{X^kY^l}$. Moreover,   $\xi$ is a primitive $4$-th root of unity.

We claim that $F\bowtie\Gamma$ is isomorphic to
\begin{align*}
     D_8\times\mathbb{Z}_2=G_8:=\langle x,t,yt\mid x^4=t^2=(yt)^2=1,tx=x^{-1}t,[x,yt]=[t,y]=1\rangle.
\end{align*}
Indeed, $x\rhd t=y\rhd t=t$, $x\lhd t=x^{-1}$ and $y\lhd t=y$, since $\rhd$ is trivial and
\begin{align*}
\langle x\lhd t, e_{i,j}\rangle=\langle x, t\rightharpoonup e_{i,j}\rangle=\langle x, e_{-i,j}\rangle=\delta_{1,-i}\delta_{0,j},\\
\langle y\lhd t, e_{i,j}\rangle=\langle y, t\rightharpoonup e_{i,j}\rangle=\langle y, e_{-i,j}\rangle=\delta_{0,-i}\delta_{1,j}.
\end{align*}
Then the claim follows by the following relations
\begin{align*}
(1\bowtie x)(t\bowtie1)=t\bowtie x^{-1},\quad(1\bowtie y)(t\bowtie1)=t\bowtie y.
\end{align*}

Now we calculate $\omega(\sigma,\tau)\in Z^3(F\bowtie\Gamma,\K^{\times})$.
\begin{enumerate}
\item $H_{B:1}:=H_{X^0Y^0}$. Then $\sigma(t,t)=\sum e_{p,q}$ is trivial, that is, $\sigma_{{x^iy^j}}(t^r,t^s)=1$. Let $a=t^qx^iy^j,b=t^rx^ky^l,c=t^sx^my^n$ for some $0\leq q,j,r,k,s$, $n<2,0\leq i,k,m<4$. Then
\begin{align*}
        &\omega(\tau,\sigma)(a,b,c)=\tau_{\pi(c)}(p(a)\lhd \pi(b),p(b)) \sigma_{p(a)} (\pi(b),p(b)\rhd\pi(c))\\
        &=\tau_{t^s}((x^i\lhd t^r)y^j,x^ky^l)=\tau_{t^s}(x^{{(-1)^{r}i}}y^j,x^ky^l)=\xi^{jks}.
\end{align*}
\item $H_{B:X}:=H_{X^1Y^0}$. Since $\sigma(t,t)=\sum (-1)^p e_{p,q}$, it follows that $\sigma_{{x^iy^j}}(t^r,t^s)=(-1)^{irs}$. Then
\begin{align*}
&\omega(\tau,\sigma)(a,b,c)=\tau_{\pi(c)}(p(a)\lhd \pi(b),p(b)) \sigma_{p(a)} (\pi(b),p(b)\rhd\pi(c))\\
&=\tau_{{t^s}}((x^i\lhd t^r) y^j,x^ky^l)\sigma_{{x^iy^j}}(t^r,t^s)=\tau_{t^s}(x^{(-1)^{r}i}y^j,x^ky^l)(-1)^{irs}\\
&=(-1)^{irs}\xi^{jks}.
\end{align*}
\end{enumerate}

\textbf{Case (C)}: The action is given by $t\rightharpoonup e_{p,q}=e_{-p+q,q}$ and $\sigma_k(t,t)=\sum\mu^{kq}e_{p,q}$ for $k=0,1$, where   $\xi=\mu^{-2k}$ and   $\mu$ is a primitive $8$-th root of unity. The Singer pair forms the Hopf algebra  $H_k$.

We claim that $F\bowtie\Gamma$ is isomorphic to
\begin{align*}
     D_{16}=G_3:=\langle ty,y\mid (ty)^8=y^2=1,(ty)y=y(ty)^{-1}\rangle.
\end{align*}
Indeed, $x\rhd t=y\rhd t=t$, $x\lhd t=x^{-1}$ and $y\lhd t=xy$, since $\rhd$ is trivial and
\begin{align*}
\langle x\lhd t, e_{i,j}\rangle=\langle x, t\rightharpoonup e_{i,j}\rangle=\langle x, e_{-i+j,j}\rangle=\delta_{1,-i+j}\delta_{0,j},\\
\langle y\lhd t, e_{i,j}\rangle=\langle y, t\rightharpoonup e_{i,j}\rangle=\langle y, e_{-i+j,j}\rangle=\delta_{0,-i+j}\delta_{1,j}.
\end{align*}
Then the claim follows by the following relations
\begin{gather*}
(1\bowtie x)(t\bowtie1)=(x\rhd t)\bowtie(x\lhd t)=t\bowtie x^{-1},\\
(1\bowtie y)(t\bowtie1)=(y\rhd t)\bowtie(y\lhd t)=t\bowtie xy.
\end{gather*}

Now we calculate $\omega(\sigma,\tau)\in Z^3(F\bowtie\Gamma,\K^{\times})$.
\begin{enumerate}
        \item $H_{C:1}:=H_0$.
Since $\sigma(t,t)=\sum e_{p,q}$ and $\xi=1$, it follows that $\sigma$ and $\tau$ are trivial and  hence $\omega(\tau,\sigma)$ is trivial.

\item $H_{C:\sigma_1}:=H_1$. Since $\sigma(t,t)=\sum \omega^q e_{p,q}$, it follows that $\sigma_{{x^iy^j}}(t^r,t^s)=\mu^{jrs}$. Then
\begin{align*}
\omega(\tau,\sigma)(a,b,c)=\xi^{jks}\mu^{jrs}=\mu^{(r-2k)js}.
\end{align*}
\end{enumerate}

\subsection*{Case \uppercase\expandafter{\romannumeral4}:
$\Gamma=Q_{8}=\langle x, y\mid x^{4}=1,y^{2}=x^2,yx=x^{-1}y\rangle$.}
Let $\{e_{p,q}\}_{\substack{0\leq p\leq3,\,0\leq q\leq1}}$ be the  basis of $\K^{\Gamma}$ dual to the basis $\{x^py^q\}$. Then the action is given by $t\rightharpoonup e_{i,j}=e_{-i+j,j}$, $\sigma$ and $\tau$ are trivial and  hence the $3$-cocycle $\omega(\tau,\sigma)$ is trivial.  The Singer pair forms the Hopf algebra $H_E$.

Recall that $x\rhd t=y\rhd t=t$. Since $(1\bowtie x)(t\bowtie 1)=(x\rhd t)\bowtie(x\lhd t)=t\bowtie x^{-1}$, $(1\bowtie y)(t\bowtie 1)=(y\rhd t)\bowtie(y\lhd t)=t\bowtie xy$, it follows that $F\bowtie\Gamma$ is isomorphic to
\begin{align*}
  SD_{16}=G_2:=\langle ty,t\mid (ty)^8=t^2=1,t(ty)t=(ty)^3\rangle.
\end{align*}

\subsection{Isomorphism classes of Galois objects}
In this subsection, we determine isomorphism classes of Galois objects of the Hopf algebras listed in \cite[Table 1]{Ka}. We first introduce some necessary materials that will be used later.

\begin{lem}\label{subgroups8}Let $L$ be a subgroup of order $8$ of  $F\bowtie\Gamma$ such that $L\cap F=1$, where $F\bowtie\Gamma$ is one of the groups of order $16$ mentioned in subsection $\ref{subsectionMatchedpair}$. Then $L$ is isomorphic to one of the following groups:
\begin{description}
 \item[Case \uppercase\expandafter{\romannumeral1} (a)]
  $\Z_4\times \Z_2:\langle x\rangle\times\langle y\rangle,\  \langle x\rangle\times \langle tx^3y\rangle;\quad D_8:\langle xt,y\rangle;\quad Q_8:\langle xt,yt\rangle$.
  \item[Case \uppercase\expandafter{\romannumeral1} (b)]
    $D_8:\langle x,ty\rangle, \langle xy,ty\rangle$;
  \quad $\Z_4\times \Z_2:\langle x\rangle\times\langle y\rangle;$\quad $\Z_2\times \Z_2\times \Z_2:\langle x^2\rangle\times \langle y\rangle\times \langle tx\rangle$.
  \item[Case \uppercase\expandafter{\romannumeral1} (c)]
  $\mathbb{Z}_4\times \mathbb{Z}_2:\langle x\rangle\times\langle y\rangle,\  \langle tx\rangle\times\langle y\rangle$.
  \item[Case \uppercase\expandafter{\romannumeral2}]
  $\mathbb{Z}_4\times \mathbb{Z}_2:\langle xt\rangle\times\langle z\rangle;\quad \mathbb{Z}_2\times \mathbb{Z}_2\times \mathbb{Z}_2:\langle x\rangle\times \langle y\rangle\times \langle z\rangle; D_8:\langle xt,yz\rangle, \;\langle (xt)z,y\rangle$.
  \item[Case \uppercase\expandafter{\romannumeral3} (B)]
  $\mathbb{Z}_4\times \mathbb{Z}_2:\langle x\rangle\times\langle yt\rangle;\quad \mathbb{Z}_2\times \mathbb{Z}_2\times \mathbb{Z}_2:\langle x^2\rangle\times \langle xt\rangle\times \langle yt\rangle;\\ D_8:\langle x,t(yt)\rangle=\langle x,y\rangle,\;\langle x(yt),t(yt)\rangle=\langle x(yt),y\rangle$.
  \item[Case \uppercase\expandafter{\romannumeral3} (C)]
  $\mathbb{Z}_8:\langle ty\rangle$;\quad $D_8:\langle x,y\rangle$.
  \item[Case \uppercase\expandafter{\romannumeral4}]
  $\mathbb{Z}_8:\langle ty\rangle$;\quad $Q_8:\langle x,y\rangle$.
\end{description}
\end{lem}
\begin{proof}
For Case \uppercase\expandafter{\romannumeral1} (a),  $F\bowtie\Gamma\cong G_7$ with generators $x,y,t$. Then   there are seven maximal subgroups, that is,
\begin{align*}
\langle x\rangle\times\langle t\rangle,\quad \langle y,t\rangle,\quad \langle yx,t\rangle,\quad \langle x\rangle\times \langle y\rangle,\quad \langle x\rangle\times\langle ytx\rangle=\langle x\rangle\times\langle tx^3y\rangle,\quad \langle xt,y\rangle,\quad \langle xt,yt\rangle.
\end{align*}

For Case \uppercase\expandafter{\romannumeral1} (b), $F\bowtie\Gamma\cong D_8\times \Z_2$ with generators $x,t,y$. Then there are seven maximal subgroups, that is,
\begin{align*}
\langle x\rangle\times\langle y\rangle,\quad  \langle x^2\rangle\times\langle y\rangle\times\langle t\rangle,\quad \langle x^2\rangle\times\langle y\rangle\times\langle tx\rangle,\quad \langle x,t\rangle,\quad \langle x,ty\rangle,\quad \langle xy,t\rangle,\quad \langle xy,ty\rangle.
\end{align*}

For Case \uppercase\expandafter{\romannumeral1} (c), $F\bowtie\Gamma\cong G_6$ with generators $x,t,y$. Then there are three maximal subgroups, that is,
\begin{align*}
\langle x\rangle\times\langle y\rangle,\quad \langle x^2\rangle\times\langle y\rangle\times\langle t\rangle,\quad \langle tx\rangle\times\langle y\rangle.
\end{align*}

For Case \uppercase\expandafter{\romannumeral2}, $F\bowtie\Gamma\cong D_8\times \Z_2$ with generators $xt,y,z$. Then there are seven maximal subgroups, that is,
\begin{align*}
\langle xt\rangle\times\langle z\rangle,\quad  \langle (xt)^2\rangle\times\langle z\rangle\times\langle y\rangle,\quad \langle (xt)^2\rangle\times\langle y(xt)\rangle\times\langle z\rangle,\quad \langle xt,y\rangle,\quad \langle xt,yz\rangle,\quad \langle (xt)z,y\rangle,\quad \langle (xt)z,yz\rangle.
\end{align*}
Observe that $(xt)^2=xy,(xt)^3=tx=yt$ and $(xt)zyz=xty=t$.

For Case \uppercase\expandafter{\romannumeral3} (B), $F\bowtie\Gamma\cong D_8\times \Z_2$ with generators $x,t,yt$. Then there are seven maximal subgroups, that is,
\begin{align*}
\langle x\rangle\times\langle yt\rangle,\quad  \langle x^2\rangle\times\langle yt\rangle\times\langle t\rangle,\quad \langle x^2\rangle\times\langle yt\rangle\times\langle xt\rangle,\quad \langle x,t\rangle,\quad \langle x,t(yt)\rangle,\quad \langle x(yt),t\rangle,\quad \langle x(yt),t(yt)\rangle.
\end{align*}

For Case \uppercase\expandafter{\romannumeral3} (C), $F\bowtie\Gamma\cong D_{16}$ with generators $ty,y$. Then there are three maximal subgroups, that is,
\begin{align*}
\langle ty\rangle,\quad  \langle x,y\rangle,\quad \langle x,tx\rangle.
\end{align*}

For  Case \uppercase\expandafter{\romannumeral4}, $F\bowtie\Gamma\cong SD_{16}$ with generators $ty,t$. Then there are three maximal subgroups, that is,
\begin{align*}
\langle ty\rangle,\quad  \langle x,t\rangle,\quad \langle x,y\rangle.
\end{align*}
Consequently, the statements follow by direct computations.
\end{proof}

 The Schur multipliers of groups of order $8$ are given by the following proposition.
\begin{pro} [\cite{Kar}]\label{Schurmultiplier}
Let $L$ be a group of order $8$. Then
 the Schur multiplier   $H^2(L,\K^{\times})$  is given as follow: $H^2(\mathbb{Z}_8,\K^{\times})=H^2(Q_8,\K^{\times})=0$, $H^2(\mathbb{Z}_4\times\mathbb{Z}_2,\K^{\times})\cong H^2(D_8,\K^{\times})\cong\mathbb{Z}_2$,
$H^2(\mathbb{Z}_2\times\mathbb{Z}_2\times\mathbb{Z}_2,\K^{\times})\cong
\mathbb{Z}_2\times\mathbb{Z}_2\times\mathbb{Z}_2$.
\end{pro}

\begin{rmk}\label{2cocycleabelian}Let $G$ be a finite abelian group and $\sigma\in Z^2(G,\K^{\times})$. Then the class of  $\sigma$ is trivial, if and only if $\K_\sigma[G]$ is commutative, if and only if $\sigma$ is symmetric, that is, $\sigma(g,h)=\sigma(h,g)$ for any $g,h\in G$. Indeed, if the class of $\sigma$ is trivial, then $\K_\sigma[G]\cong \K[G]$ is commutative; if $\K_\sigma[G]$ is commutative, then irreducible representations of $\K_\sigma[G]$ are one-dimensional, hence there exists a morphism $\varphi:G\to \K^{\times}$ such that $\varphi(g._\sigma h)=\varphi(g)\varphi(h)$, that is, $\varphi(gh)^{-1}\varphi(g)\varphi(h)=\sigma(g,h)$. Obviously, $\K_\sigma[G]$ is commutative if and only if $\sigma$ is symmetric.
\end{rmk}

Now we describe explicitly $H^2(\mathbb{Z}_2\times\mathbb{Z}_2\times\mathbb{Z}_2,\K^{\times})$, which was determined in  \cite{BPW}.
\begin{pro}$($\cite[Proposition 4.10]{BPW}$)$\label{cohomology} A set of cocycles $\sigma:\mathbb{Z}_2\times\mathbb{Z}_2\times\mathbb{Z}_2\to \K^{\times}$ which represents $H^2(\mathbb{Z}_2\times\mathbb{Z}_2\times\mathbb{Z}_2,\K^{\times})$ is given by
$$\sigma(g^{i_1}_1g^{i_2}_2g^{i_3}_3,g^{j_1}_1g^{j_2}_2g^{j_3}_3)
=(-1)^{a_{12}i_1j_2+a_{13}i_1j_3+a_{23}i_2j_3},$$
where $g_1,g_2,g_3$ are generators of $\mathbb{Z}_2\times\mathbb{Z}_2\times\mathbb{Z}_2$ and $i_l,j_k,a_{mn}\in{\{0,1}\}$ for $1\leq k,l,m,n\leq 3$.
\end{pro}
Let $\alpha\in H^2(\mathbb{Z}_2\times\mathbb{Z}_2\times\mathbb{Z}_2,\K^{\times})$. Then  by Proposition \ref{cohomology},   $\alpha$ is uniquely determined by a sequence $(a_{12},a_{13},a_{23})$. Hence $(a_{12},a_{13},a_{23})$ can be used to label $\alpha$.

Let $G\cong \Z_{m_{1}}\times\cdots \times\mathbb{Z}_{m_{n}}:=\langle g_1\rangle\times\cdots\times\langle g_n\rangle$. An explicit and unified formula of the normalized 3-cocycle on  $G$ is determined  in \cite{HLYY}.
Let $A$   be the set of all sequences
\begin{equation}\label{3.1}
\underline{\mathbf{a}}:=(a_{1},\ldots,a_{l},\ldots,a_{n},a_{12},\ldots,a_{ij},\ldots,a_{n-1,n},a_{123},\ldots,a_{rst},\ldots,a_{n-2,n-1,n})
\end{equation}
such that $ 0\leq a_{l}<m_{l}, \ 0\leq a_{ij}<(m_{i},m_{j}), \ 0\leq a_{rst}<(m_{r},m_{s},m_{t})$ for $1\leq l\leq n, \ 1\leq i<j\leq n, \ 1\leq r<s<t\leq n$ where $a_{ij}$ and $a_{rst}$ are ordered by the lexicographic order. Here  $(s_{1},\ldots,s_{t})$  denotes the greatest common divisor of   $s_{1},\ldots,s_{t}\in\Z_{+}$.

Let $\nu_{n}$ be a primitive $n$-th root of unity for any positive integer $n$,  and $[\frac{r}{n}]$    the
largest integer less than $\frac{r}{n}$ for $r \in\Z$. For any $\underline{\mathbf{a}}\in A$, define a $\mathbb{Z}G$-module morphism:
\begin{eqnarray}\label{3cocycle}
&& \omega_{\underline{\mathbf{a}}}: G \times G\times G \rightarrow\K^{\times} \notag \\
&&[g_{1}^{i_{1}}\cdots g_{n}^{i_{n}},g_{1}^{j_{1}}\cdots g_{n}^{j_{n}},g_{1}^{k_{1}}\cdots g_{n}^{k_{n}}] \mapsto \prod_{l=1}^{n}\nu_{m_{l}}^{a_{l}i_{l}[\frac{j_{l}+k_{l}}{m_{l}}]}
\prod_{1\leq s<t\leq n}\nu_{m_{t}}^{a_{st}i_{t}[\frac{j_{s}+k_{s}}{m_{s}}]}
\prod_{1\leq r<s<t\leq n}\nu_{(m_{r},m_{s},m_{t})}^{-a_{rst}k_{r}j_{s}i_{t}}. \notag
\end{eqnarray}

\begin{pro}\cite[Proposition 3.8]{HLYY}
$\{\omega_{\ULa}|\ULa\in A\}$ is a complete set of representatives of normalized $3$-cocycles on $G.$
\end{pro}

\begin{rmk}\label{cocyclerepresent}
If $G\cong\Z_N:=\langle g\rangle$, then a complete set of representative of $H^3(G,\K^\times)$ is given by
$$
\omega_a(g^i,g^j,g^k)=\nu_N^{ak[\frac{i+j}{N}]},\quad 0\leq a, i,j,k< N.
$$

If $G\cong\Z_N\times\Z_M:=\langle g\rangle\times\langle h\rangle$, then a complete set of representative of $H^3(G,\K^\times)$ is given by
\begin{align*}
\omega_{\ULa}(g^ih^j,g^kh^l,g^mh^n)
=\nu_M^{a_1i[\frac{k+m}{M}]}\nu_N^{a_2j[\frac{l+n}{N}]}\nu_{N}^{a_{12}j[\frac{k+m}{M}]},\quad 0\leq i,k,m<M,0\leq j,l,n<N,
\end{align*}
where $\ULa:=(a_1,a_2,a_{12})\in A$.
\end{rmk}

Let $H=(\K^{\Gamma}{}^{\rho,\theta}\#_{\rightharpoonup,\sigma}\K[F])^*$   be a Hopf algebra listed in \cite[Table 1]{Ka}. Now we compute the number of Galois objects of $H$ up to isomorphism.  By Proposition \ref{Galoisfiber}, we proceed this by computing the number  of isomorphism classes of fiber functors $(L,\alpha)$ on  $\mathcal{C}(F\bowtie\Gamma,\omega(\tau,\sigma),F,1)$.

From Theorem \ref{fiberfuncondition},    condition $(3)$  forces $L\cap F=1$; then condition $(2)$ implies  $L$ is a normal subgroup of order $8$ and $F\bowtie\Gamma = LF$ is an exact factorization; furthermore,   fiber functors $(L,\alpha)$, $(L{'},\alpha{'})$ are isomorphic  if and only if $L=L'$ and there exists an element $g\in G$ such that the class $\alpha^{-1}\alpha'^g\Omega_g$ is trivial in $H^2(L, \K^{\times})$.

Consequently,  it suffices to find all subgroups $L$ of order $8$ satisfying the conditions
\begin{align}
\label{conditionC}  \text{$L\cap F=1$ and  the class of $\omega(\tau,\sigma)|_{ L\times L\times L}$ is trivial},
\end{align}
 and then determine all isomorphism classes of pairs $(L,\alpha)$ for $[\alpha]\in H^2(L,\K^\times)$.  In what follows, unless specified otherwise, $L$ denotes a subgroup of order $8$ satisfying the condition \eqref{conditionC} and $\omega:=\omega(\tau,\sigma)$.

\begin{rmk}\label{rmk-1}
If $L\cong\Z_4\times\Z_2$ or $D_8$, then by Proposition \ref{Schurmultiplier}, $H^2(L,\K^{\times})\cong \Z_2$. Let $[\alpha]$ be a generator of $H^2(L,\K^{\times})$. Then by Theorem \ref{fiberfuncondition}, $(L,1)$ and $(L,\alpha)$ are isomorphic if  and only if
\begin{itemize}
\item the class of $\alpha\Omega_g|_{L\times L}$ is trivial in $H^2(L,\K^{\times})$ for some $g\in G$.
\end{itemize}
Furthermore, if $L\cong\Z_4\times\Z_2$,  by Remark \ref{2cocycleabelian}, they are isomorphic if and only if $\Omega_g|_{L\times L}$ is not symmetric.
\end{rmk}

\begin{lem}\label{H(a,y)}
Up to isomorphism, $(H_{a:1})^*$ has one or two Galois objects, and $(H_{a:y})^*$ has four Galois objects.
\end{lem}
\begin{proof}
By Lemma \ref{subgroups8}, the possibilities of $L$ are $\langle x\rangle\times\langle y\rangle\cong\Z_4\times \Z_2$, $\langle x\rangle\times \langle tx^3y\rangle\cong\Z_4\times \Z_2$, $\langle xt,y\rangle\cong D_8$,  $\langle xt,yt\rangle\cong Q_8$. Let $a=t^qx^iy^j$,  $b=t^rx^ky^l$ and $c=t^sx^my^n$ for some $q,r,s,j,l,n\in\Z_2,i,k,m\in\Z_4$.

For $H_{a:1}$,  we have $\omega(a,b,c)=(-1)^{jks}$. We first claim that $L\neq \langle x\rangle\times \langle tx^3y\rangle$ or $\langle xt,y\rangle$. Indeed, consider the restriction of $\omega$ on the subgroup $\langle tx^3y\rangle\cong\Z_2$ of $L$, we have
$$
\omega((tx^3y)^i,(tx^3y)^j,(tx^3y)^k)=(-1)^{ijk}=(-1)^{k[\frac{i+j}{2}]},\quad  i,j,k\in\{0,1\};
$$
then by Remark \ref{cocyclerepresent}, the class of $\omega|_{\langle tx^3y\rangle\times\langle tx^3y\rangle\times\langle tx^3y\rangle}$ is not trivial in $H^3(\Z_2,\K^{\times})$, which implies that the class $\omega|_{L\times L\times L}$ is non-trivial and hence the claim follows.

Assume that $L=\langle x\rangle\times\langle y\rangle\cong\Z_4\times\Z_2$. Clearly, $\omega|_{L\times L\times L}=1$. By Proposition \ref{2cocycleabelian}, $H^2(\Z_4\times\Z_2,\K^\times)\cong\Z_2$. Let $\{[\alpha]\}$ be a   set of representative of $H^2(\Z_4\times\Z_2,\K^\times)$.   We claim that $(L,1)$ and $(L,\alpha)$ are isomorphic.   By definition,
\begin{align*}
\Omega_t(x^ky^l,x^my^n)=\frac{\omega(tx^ky^lt,tx^my^nt,t)\omega(t,x^ky^l,x^my^n)}{\omega(tx^ky^lt,t,x^my^n)}=(-1)^{lm}.
\end{align*}
It is easy to see that the class of $\Omega_t|_{L\times L}$ is not symmetric. Then the claim  follows by Remark \ref{rmk-1}.

Assume that $L=\langle xt,yt\rangle\cong Q_8$. Then  by Proposition \ref{Schurmultiplier}, $H^2(Q_8,\K^\times)=0$. Hence there exists at most one Galois object for $L=\langle xt,yt\rangle$, depending on whether the class of $\omega|_{L\times L\times L}$ is trivial or not.
Consequently,  up to isomorphism there exists one or two  trivial Galois objects for $(H_{a:1})^*$.

For $H_{a:y}$, we have $\omega(a,b,c)=(-1)^{j(k+r)s}$.
Assume that $L=\langle x\rangle\times \langle tx^3y\rangle\cong\Z_4\times \Z_2$.  We claim that  $\omega|_{L\times L\times L}$ is trivial in $H^3(L,\K^{\times})$. Observe that $x^i(tx^3y)^j=t^jx^{3j+i}y^j$. Let $\beta:=tx^3y$ for short.   By Remark \ref{cocyclerepresent}, there is a 2-chain $\varphi:L\times L\to \K^\times$ and a sequence $\ULa:=(a_1,a_2,a_{12})$ such that
\begin{align*}
d^2(\varphi)\omega|_{L\times L\times L}(x^i\beta^j,x^k\beta^l,x^m\beta^n)=\omega_{\ULa}(x^i\beta^j,x^k\beta^l,x^m\beta^n)=\nu_4^{a_1i[\frac{k+m}{4}]}\nu_2^{a_2j[\frac{l+n}{2}]}\nu_2^{a_{12}j[\frac{k+m}{4}]}.
\end{align*}
Since $\omega|_{P\times P\times P}=1$ for $P=\langle x\rangle$ and $\langle \beta\rangle$, it follows that $a_1=0=a_2$. Now consider the subgroup $Q:=\langle txy\rangle=\langle x^2\beta\rangle$ of $L$, clearly $\omega|_{Q\times Q\times Q}=1$. Then
\begin{align*}
d^2(\varphi)(txy,txy,txy)=\omega_{\ULa}(txy,txy,txy)=\nu_2^{a_{12}}=(-1)^{a_{12}}.
\end{align*}
Meanwhile, $d^2(\varphi)(txy,txy,txy)=\frac{\varphi(txy,1)}{\varphi(1,txy)}$, and $\varphi(txy,1)=\varphi(1,txy)=\varphi(1,1)$, which implies $a_{12}=0$ and hence the claim follows.

Now we show that $(L,1)$ and $(L,\alpha)$ are isomorphic. Observe that $xt=tx$ and $y^lt=tx^{2l}y^l$ for $0\leq l\leq 1$. By definition,
\begin{align*}
\Omega_t(x^k\beta^l,x^m\beta^n)&=\frac{\omega(t^{l+1}x^{k+3l}y^lt,t^{n+1}x^{m+3n}y^nt,t)\omega(t,t^{l}x^{k+3l}y^l,t^nx^{3n+m}y^n)}{\omega(t^{l+1}x^{k+3l}y^lt,t,t^nx^{3n+m}y^n)}\\
&=\frac{\omega(t^{l}x^{k+l}y^l,t^{n}x^{m+n}y^n,t)\omega(t,t^{l}x^{k+3l}y^l,t^nx^{3n+m}y^n)}{\omega(t^{l}x^{k+l}y^l,t,t^nx^{3n+m}y^n)}=(-1)^{(n+m)l}.
\end{align*}
It is easy to see that $\Omega_t$ is not symmetric. Then the claim  follows by Remark \ref{rmk-1}.

Assume that $L=\langle x\rangle\times\langle y\rangle\cong\Z_4\times\Z_2$, $\langle xt,y\rangle\cong D_8$ or $\langle xt,yt\rangle\cong Q_8$.  It is easy to see that  $\omega|_{L\times L\times L}=1$.   Now we determine isomorphism classes of fiber functors. If $L=\langle x\rangle\times\langle y\rangle$, then $\Omega_t(x^ky^l,x^my^n)=(-1)^{lm}$, which is not symmetric, and hence there is only one isomorphism class.  If $L=\langle xt,yt\rangle\cong Q_8$, then by Proposition \ref{Schurmultiplier}, there is only one isomorphism class. If $L=\langle xt,y\rangle\cong D_8$, then
\begin{align*}
\Omega_t((xt)^ky^l,(xt)^my^n)=\frac{\omega(t^{k}x^{k+2l}y^l,t^{m}x^{m+2n}y^n,t)\omega(t,t^kx^ky^l,t^mx^my^n)}{\omega(t^{k}x^{k+2l}y^l,t,t^mx^my^n)}
=(-1)^{lm}
\end{align*}
which implies that the class of $\Omega_t|_{L\times L}$ is not trivial in $H^2(L,\K^{\times})$, and hence by Remark \ref{rmk-1} there is only one isomorphism. Indeed if there exists a morphism $f:L\to\K^\times$ such that $d(f)=\Omega_t|_{L\times L}$, then $f((xt)^k)=f(xt)^k$ and $f((xt)^4)=f(y^2)=f(y)^2=1$. Since $\Omega_t(xt,y)=1$, $f(y(xt))=f((xt)^3y)=f(xt)^3 f(y)$, while then $-1=\Omega_t(y,xt)=f(y)f(xt)f(y(xt))^{-1}=f(y)^2f(xt)^4=1$, impossible.

Therefore, up to isomorphism there are  four Galois objects for $(H_{a:y})^*$.
\end{proof}
\begin{rmk}
 From the proof of Lemma \ref{H(a,y)}, if the class of $\omega|_{L\times L\times L}$ for $L=\langle xt,yt\rangle\cong Q_8$ is trivial, then $(H_{a:1})^*$ has two Galois objects; otherwise $(H_{a:1})^*$ has only one Galois object.
\end{rmk}

\begin{lem}\label{H(b)}
Up to isomorphism,  $(H_{b:1})^*$ has  one or two  Galois objects, $(H_{b:y})^*$ and  $(H_{b:x^2y})^*$ have seven Galois objects.
\end{lem}
\begin{proof}
By Lemma \ref{subgroups8}, the possibilities of $L$ are $\langle x,ty\rangle\cong D_8$, $\langle xy,ty\rangle\cong D_8$,
   $ \langle x\rangle\times\langle y\rangle\cong \Z_4\times \Z_2$,   $\langle x^2\rangle\times \langle y\rangle\times \langle tx\rangle\cong \Z_2\times \Z_2\times \Z_2$.
   Let $a=t^qx^iy^j$,  $b=t^rx^ky^l$ and $c=t^sx^my^n$ for some $q,r,s,j,l,n\in\Z_2,i,k,m\in\Z_4$.

For $H_{b:1}$, we have $\omega(a,b,c)=(-1)^{jks}$.
If $L= \langle x,ty\rangle$ or $\langle x^2\rangle\times \langle y\rangle\times \langle tx\rangle$, then the restriction of $\omega$ on the subgroup $\langle tx^3y\rangle$ is not trivial in $H^3(\langle tx^3y\rangle,\K^{\times})$ and hence $\omega|_{L\times L\times L}$ is not trivial in $H^3(L,\K^{\times})$.

Assume that $L=\langle xy,ty\rangle\cong D_8$. Then a direct computation shows that the class of $\Omega_t|_{L\times L}$ is non-trivial.  If the class of $\omega|_{L\times L\times L}$ is non-trivial, then there is no Galois object; otherwise there is  one Galois object by Remark \ref{rmk-1}. Hence  there is at most one Galois object in this case.

Assume that $L= \langle x,y\rangle\cong \Z_4\times\Z_2$. Clearly, $\omega|_{L\times L\times L}=1$.   By definition,  $\Omega_t(x^ky^l,x^my^n)=(-1)^{lm}$,  which  is not symmetric. Then by Remark \ref{rmk-1}, $(L,\alpha)$ and  $(L,1)$ are isomorphic, where $[\alpha]$ generates $H^2(L,\K^{\times})$.
Consequently,  $(H_{b:1})^*$   has one or two Galois objects up to isomorphism.

For   $H_{b:y}$ and $H_{b:x^2y}$, $\omega(a,b,c)=(-1)^{j(k+r)s}$ and $(-1)^{(jk+jr+ir)s}$, respectively. We claim that $L\neq\langle x,ty\rangle$ or $\langle xy,ty\rangle$.    Indeed,  consider the subgroup $\langle ty\rangle\cong\Z_2$ of $L$, we have
$$\omega((ty)^i,(ty)^j,(ty)^k)=(-1)^{k[\frac{i+j}{2}]};$$
then by   Remark \ref{cocyclerepresent}, the class of $\omega|_{\langle ty\rangle\times \langle ty\rangle\times \langle ty\rangle}$ is non-trivial, which implies that $\omega|_{L\times L\times L}$ is non-trivial.

Assume that $L=\langle x\rangle\times\langle y\rangle\cong\Z_4\times\Z_2$ or $\langle x^2\rangle\times \langle y\rangle\times \langle tx\rangle \cong \Z_2\times\Z_2\times\Z_2$.
It is easy to see that $\omega|_{L\times L\times L}=1$. Now we determine the isomorphism classes of the fiber functors. If $L=\langle x\rangle\times\langle y\rangle$, then there is one isomorphism class since $\Omega_t|_{L\times L}$ is not symmetric. Now we  focus on $L=\langle x^2\rangle\times \langle y\rangle\times \langle tx\rangle$. For $H_{b:y}$, a direct computation shows $\Omega_{t\pi}=\Omega_t$ for all $ \pi\in L$  and  $\Omega_t((x^2)^iy^j(tx)^k,(x^2)^ly^m(tx)^n)=(-1)^{jn}$.
Let $(L,\alpha)$ and $(L,\beta)$ be two fiber functors, where $[\alpha], [\beta]\in H^2(L,\K^{\times})$ are determined by the sequences $(a_{12},a_{13},a_{23})$ and $(b_{12},b_{13},b_{23})$ by Proposition \ref{cohomology}, respectively. By Theorem \ref{fiberfuncondition} and Remark \ref{2cocycleabelian}, they are isomorphic if and only if there exists $g\in G$ such that the class of $\alpha\beta^g\Omega_g$ is trivial, if and only if $\alpha\beta^g\Omega_g$ is symmetric. Since \begin{gather*}\alpha\beta^t\Omega_t((x^2)^iy^j(tx)^k,(x^2)^ly^m(tx)^n)
=(-1)^{(a_{12}+b_{12})im+(a_{13}+b_{13})in+(a_{23}+b_{23}+1)jn+b_{12}km+b_{13}kn},
\end{gather*}
$\alpha\beta^t\Omega_t$ is symmetric if and only if $b_{12}=a_{12}=0$, $b_{13}+a_{13}\equiv0 \mod 2$ and $a_{23}+b_{23}+1\equiv0 \mod 2$, which means $(0,0,0)$ and $(0,0,1)$ are isomorphic, $(0,1,0)$ and $(0,1,1)$ are isomorphic. For other $tl$ $(l\in L)$, the results on equivalence classes are the same.
Consequently, there are  seven Galois objects for $(H_{b:y})^*$.
For $H_{b:x^2y}$, the proof follows the same lines.
\end{proof}

\begin{rmk}
From the proof of Lemma \ref{H(b)}, if $\omega|_{L\times L\times L}$ for $L=\langle xy,ty\rangle\cong D_8$ is trivial, then $(H_{b:1})^*$ has only one Galois object; otherwise $(H_{b:1})^*$ has two Galois objects.
\end{rmk}

\begin{lem}
Up to isomorphism,  $(H_{c:\sigma_0})^*$ has two Galois objects and $(H_{c:\sigma_1})^*$ only has trivial Galois object.
\end{lem}
\begin{proof}
By Lemma \ref{subgroups8}, the possibilities of $L$ are $\langle x\rangle\times\langle y\rangle\cong\Z_4\times \Z_2$,   $\langle tx\rangle\times\langle y\rangle\cong\Z_4\times \Z_2$. Let $a=t^qx^iy^j$,  $b=t^rx^ky^l$ and $c=t^sx^my^n$ for some $q,r,s,j,l,n\in\Z_2,i,k,m\in\Z_4$.

For $H_{c:\sigma_0}$, $\omega(a,b,c)=(-1)^{{\frac{2(ri+j)ks+i(i-1)rs}{2}}}$. Assume that $L=\langle tx\rangle\times\langle y\rangle\cong\Z_4\times\Z_2$.
We first show that the class of $\omega|_{L\times L\times L}$ is trivial. Observe that $(tx)^iy^j=t^ix^iy^{[\frac{i}{2}]+j}$; then direct computations show that $\omega|_{P\times P\times P}=1$ if $P=
\langle tx\rangle$, $\langle y\rangle$ or $\langle (tx)^2y\rangle=\langle x^2\rangle$; following the same line as the proof of the case $H_{a:y}$ in Lemma \ref{H(a,y)}, the claim follows.  Now we show that $(L,1)$ and $(L,\alpha)$ are isomorphic, where $[\alpha]$ generates $H^2(\Z_4\times\Z_2,\K^{\times})$. Let $\beta:=tx$ for short. Observe that $ty=yt$, $x^kt=tx^ky^k$ and $\beta^kt=t^{k+1}x^ky^{[\frac{k}{2}]+k}=t\beta^ky^k$. By definition,
\begin{align*}
\Omega_t(\beta^ky^l,\beta^my^n)
&=\frac{\omega(\beta^ky^{k+l},\beta^my^{m+n},t)\omega(t,\beta^ky^l,\beta^my^n)}{\omega(\beta^ky^{k+l},t,\beta^my^n)}\\
&=\frac{\omega(t^kx^ky^{[\frac{k}{2}]+k+l},t^mx^my^{[\frac{m}{2}]+m+n},t)
\omega(t,t^kx^ky^{[\frac{k}{2}]+l},t^mx^my^{[\frac{m}{2}]+n})}{\omega(t^kx^ky^{[\frac{k}{2}]+k+l},t,t^mx^my^{[\frac{m}{2}]+n})}\\
&=(-1)^{mk+[\frac{k}{2}]+k+l}.
\end{align*}
It is easy to see that $\Omega_t$ is not symmetric. Hence by Remark \ref{rmk-1},   $(L,1)$ and $(L,\alpha)$ are isomorphic.

Assume that $L=\langle x\rangle\times\langle y\rangle\cong\Z_4\times \Z_2$. Clearly, $\omega|_{L\times L\times L}=1$. We claim that $(L,1)$ and $(L,\alpha)$ are isomorphic. Indeed,  $\Omega_t(x^ky^l,x^my^n)=(-1)^{(k+l)m}$, which is not symmetric, hence the claim follows by Remark \ref{rmk-1}.
Consequently, there are two Galois objects up to isomorphism for $(H_{c:\sigma_0})^*$.

For $H_{c:\sigma_1}$, we have $\omega(a,b,c)=(-1)^{{\frac{2(ri+j)ks+i(i-1)rs}{2}}}\theta^{irs}$. We first claim that $L\neq\langle tx\rangle\times\langle y\rangle$.  Indeed, it suffices to show that  the restriction of $\omega$ on the subgroup $\langle tx\rangle\cong\Z_4$ is non-trivial.   By Remark \ref{cocyclerepresent}, there is a 2-cochain $\varphi:\langle tx\rangle\times \langle tx\rangle\rightarrow\K^{\times}$ such that $\omega d^2(\varphi)=\omega_a$ for some $0\leq a<4$. Observe that $(\omega d^2(\varphi))(tx, (tx)^j,tx)=\nu_4^a$ if $j=3$ otherwise it equals to $1$. Then  $\nu_4^a=-\theta\frac{\varphi((ty)^3,ty)}{\varphi(ty, (ty)^3)}=(-\theta)^2=-1$, that is, the class of $\omega|_{\langle tx\rangle\times\langle tx\rangle\times\langle tx\rangle}$ is non-trivial, which implies that   the claim follows.

Assume that $L=\langle x\rangle\times\langle y\rangle\cong\Z_4\times \Z_2$. Clearly, $\omega|_{L\times L\times L}=1$. Since $\Omega_t|_{L\times L}$ is not symmetric, it follows by Remark \ref{rmk-1} that  $(L,1)$ and $(L,\alpha)$ are isomorphic, where $[\alpha]$ represents $H^2(L,\K^\times)\cong\Z_2$. Consequently, there are only trivial Galois objects for $(H_{c:\sigma_1})^*$.
\end{proof}
\begin{lem}\label{lem:H{d:-1,1}}
Up to isomorphism, $(H_{d:-1,1})^*\cong H_{d:-1,1}$ has four Galois objects, $(H_{d:-1,-1})^*\cong H_{c:\sigma_0}$ and $(H_{d:1,-1})^*\cong H_{b:1}$ have  five Galois objects, the number of Galois objects of $(H_{d:1,1})^*\cong H_{d:1,1}$ ranges from six to eight.
\end{lem}
\begin{proof}
By Lemma \ref{subgroups8}, the possibilities of $L$ are $\langle xt\rangle\times\langle z\rangle\cong \Z_4\times \Z_2$,  $\langle x\rangle\times \langle y\rangle\times \langle z\rangle\cong  \Z_2\times \Z_2\times \Z_2$, $\langle xt ,yz\rangle\cong D_8$,  $\langle (xt)z,y\rangle\cong D_8$. 

For $H_{d:-1,1}$ and   $H_{d:1,-1}$,   we claim that $L= \langle x\rangle\times\langle y\rangle\times\langle z\rangle\cong\Z_2\times\Z_2\times\Z_2$. Indeed, take $H_{d:-1,1}$ for example, if $L=\langle xt\rangle\times\langle z\rangle$ or $\langle (xt)z,y\rangle$,   then consider the restriction of $\omega$ on the subgroup $\langle (xt)z\rangle\cong\Z_4$, by Remark \ref{cocyclerepresent}, there is a 2-cochain $\varphi:\Z_4\times \Z_4\to \K^\times$  such that $\omega d^2(\varphi)=\omega_a$ for some $0\leq a<4$. A direct computation shows  that $a=2$, that is,  the class of $\omega|_{\langle (xt)z\rangle\times\langle (xt)z\rangle\times\langle (xt)z\rangle}$  is non-trivial in $H^3(\Z_4,\K^{\times})$ and hence  the claim follows. If $L=\langle xt,yz\rangle$, then similar computations as before show that  the class of $\omega$ is non-trivial when restricted to the subgroup $\langle xt\rangle$. Clearly, $\omega|_{L\times L\times L}=1$ if $L= \langle x\rangle\times\langle y\rangle\times\langle z\rangle$. Therefore, the claim follows.

Now we determine the isomorphism classes of the fiber functors. For $H_{d:-1,1}$, by definition,
$$\Omega_g(x^iy^jz^k,x^ly^mz^n)=
\frac{\omega(gx^iy^jz^kg^{-1},gx^ly^mz^ng^{-1},g)\omega(g,x^iy^jz^k,x^ly^mz^n)}
{\omega(gx^iy^jz^kg^{-1},g,x^ly^mz^n)},\quad \forall g\in G.$$
Then for any $\pi\in L$,
\begin{align*}
\Omega_{t\pi}(x^iy^jz^k,x^ly^mz^n)=\Omega_t(x^iy^jz^k,x^ly^mz^n)=(-1)^{kl+km+im}.
\end{align*}

Reorganizing the order of $x^iy^jz^k$ to $z^kx^iy^j$,  by Proposition \ref{cohomology}, $\Omega_t$ corresponds to the element determined by $(1,1,1)$ in $H^2(\Z_2\times\Z_2\times\Z_2,\K^{\times})$.  Let $[\alpha],[\beta]\in H^2(\Z_2\times\Z_2\times\Z_2,\K^{\times})$   determined by the sequences $(a_{12},a_{13},a_{23})$ and $(b_{12},b_{13},b_{23})$, respectively. Observe that $\Omega_{t\pi}=\Omega_{t}$ for all $\pi\in L$. Then  by Remark \ref{2cocycleabelian},  fiber functors $(L,\alpha)$ and $(L,\beta)$ are   isomorphic, if and only if, the class of $\alpha\beta^t\Omega_t$ is trivial, if and only if, $\alpha\beta^t\Omega_t$ is symmetric. By definition of $\alpha, \beta, \beta^t$  and $\Omega_t$,
$$\alpha\beta^t\Omega_t(z^kx^iy^j,z^nx^ly^m)=
(-1)^{(a_{12}+b_{13}+1)kl+(a_{13}+b_{12}+1)km+(a_{23}+1)im+b_{23}jl}.$$
Then $\alpha\beta^t\Omega_t(z^kx^iy^j,z^nx^ly^m)$ is symmetric if and only if $b_{23}=0, a_{23}=1$, $a_{12}+b_{13}+1=a_{13}+b_{12}+1\equiv0 \mod 2$. Therefore, elements in $H^2(\mathbb{Z}_2\times\mathbb{Z}_2\times\mathbb{Z}_2,\K^{\times})$ are divided into four classes:
$$
 \{(0,0,1), (1,1,0)\},\quad \{(1,1,1), (0,0,0)\},\quad \{(1,0,1), (1,0,0)\},\quad \{(0,1,1), (0,1,0)\}.
$$
Therefore,  there are   four Galois objects up to isomorphism for $(H_{d:-1,1})^*\cong H_{d:-1,1}$. For $H_{d:1,-1}$, the proof follows the same line as for $H_{d:-1,1}$.

For $H_{d:-1,-1}$, we show that the class of $\omega|_{L\times L\times L}$ is  trivial only when $L=\langle x\rangle\times\langle y\rangle\times\langle z\rangle$ and $\langle xt\rangle\times\langle z\rangle$.
If $L=\langle (xt)z,y\rangle$ or $\langle xt,yz\rangle$,   then  $\omega((txyz)^i,(txyz)^j,(txyz)^k)=(-1)^{k[\frac{i+j}{2}]}$, and hence  by Remark \ref{cocyclerepresent}    the class of  $\omega$  is not trivial in $H^3(\Z_2,\K^{\times})$  when restricted to the subgroup $\langle txyz\rangle\cong\Z_2$, which implies that the class of $\omega|_{L\times L\times L}$ is not trivial in $H^3(L,\K^{\times})$.
If $L=\langle x\rangle\times\langle y\rangle\times\langle z\rangle$, then completely analogous to the case  $H_{d:-1,1}$,   there exist four isomorphism classes.
If $L=\langle xt\rangle\times\langle z\rangle$, then similar to the proof of Lemma \ref{H(a,y)},  the class of  $\omega|_{L\times L\times L}$ is trivial and direct computations show that $\Omega_{t}$ is not symmetric, which implies that $(L,1)$ and $(L,\alpha)$ are isomorphic. Consequently, there are   five Galois objects for $(H_{d:-1,-1})^*$.

For $H_{d:1,1}$, we   show that the class of $\omega|_{L\times L\times L}$ is  trivial when $L=\langle x\rangle\times\langle y\rangle\times\langle z\rangle$ and $\langle xt\rangle\times\langle z\rangle$. Similar to  the case  $H_{d:-1,-1}$,   the fiber functors $(L,\alpha)$ admit   five or one  isomorphism class if $L=\langle x\rangle\times\langle y\rangle\times\langle z\rangle$ or $\langle xt\rangle\times\langle z\rangle$, respectively.  For $L=\langle xt, yz\rangle\cong D_8$  or $ \langle (xt)z, y\rangle\cong D_8$, a direct computation shows that the  class of $\Omega_t|_{L\times L}$ is non-trivial in both cases, then there are at most two Galois objects as in Lemma \ref{H(b)}. Consequently, the number of Galois objects of $(H_{d:1,1})^*\cong H_{d:1,1}$ ranges from six to eight.
\end{proof}

\begin{rmk}
From the proof of Lemma \ref{lem:H{d:-1,1}}, if $\omega|_{\langle xt, yz\rangle\times\langle xt, yz\rangle\times\langle xt, yz\rangle}$ and $\omega|_{\langle (xt)z, y\rangle\times\langle (xt)z, y\rangle\times\langle (xt)z, y\rangle}$ are trivial, then $H_{d:1,1}$ has eight Galois objects; if $\omega|_{\langle xt, yz\rangle\times\langle xt, yz\rangle\times\langle xt, yz\rangle}$ and $\omega|_{\langle (xt)z, y\rangle\times\langle (xt)z, y\rangle\times\langle (xt)z, y\rangle}$ are not trivial, then $H_{d:1,1}$ has six Galois objects; otherwise, $H_{d:1,1}$ has seven Galois objects.

\end{rmk}

\begin{lem}
$(H_{B:1})^*\cong H_{C:1} $ and $(H_{B:X})^*\cong H_E $ have only trivial Galois objects up to isomorphism.
\end{lem}
\begin{proof}
By Lemma \ref{subgroups8}, the possibilities of $L$ are $\langle x\rangle\times\langle yt\rangle\cong\Z_4\times \Z_2$, $\langle x^2\rangle\times \langle tx\rangle\times \langle yt\rangle\cong \Z_2\times \Z_2\times \Z_2$  $\langle x,t(yt)\rangle=\langle x,y\rangle\cong D_8$, $\langle x(yt),t(yt)\rangle=\langle x(yt),y\rangle\cong D_8$.

For $ H_{B:1}$, we first claim that $L=\langle x,y\rangle\cong D_8$. If   $L=\langle x^2\rangle\times\langle tx\rangle\times\langle yt\rangle$ or $\langle x\rangle\times\langle yt\rangle\cong\Z_4\times\Z_2$, then consider the subgroup $\langle tx^2y\rangle\cong\Z_2$ of $L$, we have
$$
\omega((tx^2y)^i,(tx^2y)^j,(tx^2y)^k)=(-1)^{k[\frac{i+j}{2}]};
$$
by Remark \ref{cocyclerepresent},   the class of $\omega$  is not trivial in $H^3(\Z_2,\K^\times)$ when restricted to subgroup   $\langle tx^2y\rangle$, which implies that the class of  $\omega|_{L\times L\times L}$ is non-trivial. If $L=\langle x(yt),y\rangle\cong D_8$, then  a direct computation shows that  restriction of $\omega$ on  $\langle x(yt)\rangle\cong\Z_4$ is a non-trivial class by Remark \ref{cocyclerepresent}. Clearly, $\omega|_{L\times L\times L}=1$ if  $L=\langle x,y\rangle\cong D_8$. Therefore, the claim follows.

Now we claim that $(L,1)$ and $(L,\beta)$ are isomorphic, where $[\beta]$ represents $ H^2(D_8,\K^{\times})$.    Observe that $\Omega_t(x^ky^l,x^my^n)=\xi^{lm}$. If $\Omega_t=d(f)$ for some morphism $f:L\to\K^\times$, then  $f(x^k)=f(x)^k$ and $f(y)^2=f(y^2)=f(x)^4=1$. Observe that $1=\Omega_t(x^3,y)$. Then  $f(x^3)f(y)=f(x^3y)=f(x)^3f(y)$. Since $\xi^{-1}=\Omega_t(y,x)=f(y)f(x)f(yx)^{-1}$, it follows that $-f(x)^2=f(yx)^2=f(x^3y)^2=f(x)^6f(y)^2=f(x)^2$, a contradiction. Therefore, the class of $\Omega_t|_{L\times L}$ is not trivial and then the claim follows by Remark \ref{rmk-1}. Consequently, there are    only trivial Galois objects for $(H_{B:1})^*\cong H_{C:1}$.

For $H_{B:X}$, the proof follows the same line as for $H_{B:1}$.
\end{proof}

\begin{lem}Up to isomorphism $H_{B:1}\cong (H_{C:1})^*$ has three Galois objects, and $H_{C:\sigma_1}$ has only   trivial Galois objects.
\end{lem}
\begin{proof}
 By Lemma \ref{subgroups8}, the possibilities of $L$ are $\langle ty\rangle\cong\Z_8$ and $\langle x,y\rangle\cong D_8$. 

For $(H_{C:\sigma_1})^*\cong H_{C:\sigma_1}$, we first claim that $L\neq\langle ty\rangle\cong\Z_8$.  Indeed, if the class of $\omega|_{L\times L\times L}$ is  trivial, then by Remark \ref{cocyclerepresent},  there must be a 2-cochain $\varphi: L\times L\to \K^\times$ such that $\omega d^2(\varphi)=\omega_a$ for some $0\leq a<8$. Then $\omega((ty),(ty)^j,(ty))=\nu_8^a$ if $j=7$ otherwise it is equal to 1. In particular, $\nu_8^a=\mu^{-1}\frac{\varphi((ty)^7,(ty))}{\varphi((ty),(ty)^7)}$, where $\mu$ is a primitive 8-th root of unity, and induction on $j$ shows that $\nu_8^a=\mu^2\neq1$.

Assume that  $L=\langle x,y\rangle\cong D_8$. Clearly, $\omega|_{L\times L\times L}=1$. Now we determine the isomorphism classes of fiber functors.    By definition, $\Omega_t(x^ky^l,x^my^n)=\xi^{l(n-m)}$.  We claim that  the class of $\Omega_t$  is not trivial in  $H^2(D_8,\K^{\times})$. Indeed, if $\Omega_t=d(f)$ for some morphism $f:D_8\to \K^{\times}$, then from $\Omega_t(y,y)=1=\Omega_t(x,y)$, we have $f(y)^2=1$ and $f(xy)=f(x)f(y)$; since $\Omega_t(xy,y)=\xi$, it follows that $\xi f(x)=f(xy)f(y)$, $-f(x)^2=f(xy)^2f(y)^2=f(x)^2$, a contradiction. Consequently, there is only one Galois object up to isomorphism for $H_{C:\sigma_1}$ by Remark \ref{rmk-1}.

For   $H_{C:1} $,    $\omega(\tau,\sigma)$ is trivial, so is $\Omega_g$ for $\forall g\in G$. Hence by Theorem \ref{fiberfuncondition}, Lemma \ref{subgroups8} and Proposition \ref{Schurmultiplier}, there are three Galois objects for $(H_{C:1})^*\cong H_{B:1}$.
\end{proof}
\begin{lem}There are two Galois objects up to isomorphism for $(H_E)^*\cong H_{B:X}$.
\end{lem}
\begin{proof}
In this case,  $\omega(\tau,\sigma)$ is trivial. Then by Lemma \ref{subgroups8},    $L=\langle ty\rangle\cong\Z_8$ or $\langle x, y\rangle\cong Q_8$. Consequently, the lemma follows by  Proposition \ref{Schurmultiplier} and Theorem \ref{fiberfuncondition}.
\end{proof}
In summary, we have   the following theorem:
\begin{thm}\label{thmAmain}
Let $H$ be a semisimple Hopf algebra of dimension $16$ in \cite[Table\,1]{Ka}, up to isomorphism the number of Galois objects of $H^*$ is given as follows:
\begin{enumerate}
\item  $(H_{C:\sigma_1})^*\cong H_{C:\sigma_1} $, $(H_{{c:\sigma_{1}}})^*$, $(H_{B:1})^*\cong H_{C:1}$ and $(H_{B:X})^*\cong H_E$ only have one  trivial Galois object.
\item  $(H_E)^*\cong H_{B:X}$ and $(H_{{c:\sigma_{0}}})^*$ have two Galois objects.
\item  $(H_{C:1})^*\cong H_{B:1}$ and $(H_{a:y})^*$ have  three Galois objects.
\item  $(H_{d:-1,1})^*\cong H_{d:-1,1}$ has four Galois objects.
\item $(H_{d:-1,-1})^*\cong H_{{c:\sigma_{1}}}$ and $(H_{d:1,-1})^*\cong H_{{b:1}}$  have   five Galois objects.
\item  $(H_{{b:x^2y}})^*$ and $(H_{b:y})^*$ have seven Galois objects.
\item $(H_{a:1})^*$ and $(H_{b:1})^*$ have one or two Galois objects, the number of Galois objects of $(H_{d:1,1})^*\cong H_{d:1,1}$ ranges from six to eight.
 \end{enumerate}
\end{thm}
\begin{rmk}
Masuoka has already proved that $\mathcal{B}_{16}\cong H_{C:\sigma_1}$ has only trivial Galois objects and $\mathcal{A}_{16}\cong H_{B:1}$ has  three   Galois objects up to isomorphism, see \cite[Theorem 4.8]{Ma2} for  details.
\end{rmk}
\section{Cocycle deformations of semisimple Hopf algebras of dimension $16$}\label{secCocycle}

In this section, we determine  cocycle deformations of semisimple Hopf algebras of dimension $16$ listed in \cite[Table\,1]{Ka} using their Frobenius-Schur indicators and  Grothendieck rings. The Grothendieck rings of these Hopf algebras have been determined by Kashina $($see \cite[Table 1]{Ka}$)$ and the second indicators of irreducible representations  have been computed in \cite{KMM}.

We first determine   twist equivalence classes of these Hopf algebras.
\begin{lem}\label{indicators}
The Hopf algebras $H_{a:1}$, $H_{a:y}$, $H_{b:y}$, $H_{b:{x^2y}}$, $(H_{c:{\sigma_1}})^*\cong H_{d:-1,-1}$ and  $(H_{b:1})^*\cong H_{d:1,-1}$ have different Frobenius-Schur indicator  sets and hence  they are pairwise Drinfeld twist inequivalent.
\end{lem}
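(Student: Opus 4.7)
Since higher Frobenius--Schur indicators are Drinfeld twist invariants (Theorem stated just above), to conclude pairwise twist-inequivalence it suffices to exhibit, for each of the six Hopf algebras on the list, the collection $\{\nu_n(\chi) : \chi\in\mathrm{Irr}(H),\ n\geq 1\}$ of indicator values attached to its simple modules, and to verify that the resulting six collections are pairwise distinct. The whole problem therefore reduces to evaluating $\nu_n(\chi) = \chi(\Lambda^{[n]})$, where $\Lambda$ is the normalized two-sided integral of $H$.

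My plan is to carry this out directly from the data accumulated in Section~\ref{secSemisimple16}. In every case $H$ fits into an abelian extension $k^{\Gamma}\hookrightarrow H\twoheadrightarrow kF$ with $F=\langle t\rangle\cong\mathbb{Z}_2$, so by the formula recalled in the preliminaries the integral is
\[
\Lambda = \tfrac{1}{2}\bigl(\delta_1\# 1 + \delta_1\# t\bigr).
\]
The Sweedler powers $\Lambda^{[n]}=\Lambda_{(1)}\cdots\Lambda_{(n)}$ are then computed by iterating the coproduct rule $\Delta(e_g\#x)=\sum_{st=g}\tau_x(s,t)\,e_s\#(t\rhd x)\otimes e_t\#x$, which is greatly simplified by the triviality of the action $\rhd$ in the cocentral setting and by the explicit values of $\sigma$ and $\tau$ tabulated case by case in Section~\ref{secSemisimple16}.

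Next, I would parametrize $\mathrm{Irr}(H)$ by the standard abelian-extension picture: one-dimensional characters correspond to $F$-fixed characters of $\Gamma$, while each length-$2$ $F$-orbit in $\widehat{\Gamma}$ yields a $2$-dimensional simple module whose character is explicit. Evaluating these characters on $\Lambda^{[n]}$ produces closed-form sums of roots of unity for every $\nu_n(\chi)$. In practice a single low index, usually $\nu_2$ (which by Theorem~\ref{thmFrobeniusindicator} only takes values $0,\pm 1$), already separates several of the pairs, and going up to $\nu_3$ or $\nu_4$ suffices for the remaining ones. The conclusion is then read off from a six-row comparison table of indicator multisets.

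The main obstacle is sheer combinatorial volume rather than any conceptual difficulty: the bookkeeping for $\Lambda^{[n]}$ and for the character tables must be repeated for each of the six matched pairs, each with its own cocycle data and its own list of simple modules, and for the dual algebras $(H_{c:\sigma_1})^{*}$, $(H_{b:1})^{*}$ one must first dualize the extension data before applying the same procedure. A helpful sanity-check is that whenever $\sigma$ or $\tau$ is trivial (for instance in $H_{a:1}$ or after dualizing to $(H_{b:1})^{*}$) the Sweedler powers collapse essentially to powers of group-like elements, giving clean baseline indicator values against which the more twisted algebras $H_{a:y}$, $H_{b:y}$, $H_{b:x^{2}y}$, $(H_{c:\sigma_1})^{*}$ can be contrasted.
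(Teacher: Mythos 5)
Your proposal follows essentially the same route as the paper: both rely on the twist-invariance of the higher Frobenius--Schur indicators, compute the Sweedler powers $\Lambda^{[n]}$ of the integral $\Lambda=\tfrac{1}{2}(\delta_1\#1+\delta_1\#t)$ directly from the abelian-extension data of Section~\ref{secSemisimple16}, and separate the six algebras by the values of $\nu_2$, $\nu_3$, $\nu_4$ on the $2$-dimensional simple modules (the paper, like you anticipate, finds $\nu_2$ and $\nu_3$ insufficient on their own and must go up to $\nu_4$, where the values $1,1,3,0,2$ together with the $\nu_2$ pattern of $H_{b:x^2y}$ finish the separation). The only caveat is that the substance of the lemma is the explicit numerical verification, which your plan defers to ``bookkeeping''; carried out, it reproduces exactly the paper's table.
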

\begin{proof}
Clearly, it is not enough to distinguish all the Drinfeld twist equivalence classes via the second indicators. We shall prove the lemma by computing   higher Frobenius-Schur indicators. A direct computation shows that the third indicators are not sufficient to distinguish them. Here we omit the computation details, because  the expression of the Sweedler power of the integral is too long.
Let $\chi_{1}$, $\chi_2$ be the characters of $2$-dimensional irreducible representations of these Hopf algebras, respectively.

The fourth Sweedler power of integral $\Lambda$ with $\varepsilon(\Lambda)=1$ is given by
\begin{gather*}
\Lambda^{[4]}=\left\{
  \begin{array}{ll}
    \frac{3+y}{4}, & \hbox{$H\cong H_{a:1}$ or $H_{a:y}$;} \\
    \frac{3}{2}, & \hbox{$H\cong H_{b:y}$;}\\
     1, & \hbox{$H\cong H_{d:1,-1}$;} \\
      \frac{1+xy}{2}, & \hbox{$H\cong H_{d:-1,-1}$.}
  \end{array}
\right.
\end{gather*}


For $H_{a:1}$, the indicators are: $\nu_2(\chi_{1})=-1,~\nu_2(\chi_{2})=1$;~$\nu_3(\chi_{1})=\nu_3(\chi_{2})=0$; $\nu_4(\chi_{1})=\nu_4(\chi_{2})=1$.

For $H_{a:y}$, the indicators are: $\nu_2(\chi_{1})=\nu_2(\chi_{2})=1$;~$\nu_3(\chi_{1})=\nu_3(\chi_{2})=0$; $\nu_4(\chi_{1})=\nu_4(\chi_{2})=1$.

For $H_{b:y}$,  the indicators are: $\nu_2(\chi_{1})=1,~\nu_2(\chi_{2})=-1$;~$\nu_3(\chi_{1})=\nu_3(\chi_{2})=0$; $\nu_4(\chi_{1})=\nu_4(\chi_{2})=3$.

For $H_{b:{x^2y}}$, the indicators are: $\nu_2(\chi_{1})=\nu_2(\chi_{2})=-1$.

For $H_{d:-1,-1}$, the indicators are: $\nu_2(\chi_{1})=\nu_2(\chi_{2})=1$;
$\nu_3(\chi_{1})=\nu_3(\chi_{2})=\nu_4(\chi_{1})=\nu_4(\chi_{2})=0$.

For $H_{d:1,-1}$ the indicators are: $\nu_2(\chi_{1})=\nu_2(\chi_{2})=1$,~$\nu_3(\chi_{1})=\nu_3(\chi_{2})=0$,
~$\nu_4(\chi_{1})=\nu_4(\chi_{2})=2$.

Consequently,  these Hopf algebras have different Frobenius-Schur indictor sets. Then by Theorem \ref{gaugeinvariant}, they are pairwise Drinfeld twist inequivalent.
\end{proof}

\begin{thm}\label{ThmTwist}
Let $H$ and $K$ be  two non-isomorphic semisimple Hopf algebras of dimension $16$ listed in \cite[Table\,1]{Ka}. Then $H$ and $K$ are twist inequivalent.
\end{thm}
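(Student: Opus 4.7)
The plan is to combine two gauge invariants of semisimple Hopf algebras under Drinfeld twisting: the Grothendieck ring, which is preserved by Theorem (stated after the definition of twist equivalence in \cite{Ni}), and the higher Frobenius-Schur indicators, which are preserved by the theorem of \cite{MN,NS1}. Together these give a separating set of invariants, and my job is only to verify that for every pair of Hopf algebras in \cite[Table\,1]{Ka} at least one of the two invariants differs.

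First I would sort the $16$ Hopf algebras into classes according to the isomorphism type of the Grothendieck ring $K_0(\text{Rep}\,H)$, using the irreducible representations from \cite{Ka}. The structure of the group of group-like elements $G(H)$, together with the dimensions and fusion rules of the non-trivial simple modules, already separates most of the families (a), (b), (c), (d), $E$, $B$, $C$; within each family the subcases often also differ at the Grothendieck-ring level. For the few pairs that remain indistinguishable by their fusion ring, I would invoke Lemma \ref{indicators}, whose tabulated values of $\nu_2,\nu_3,\nu_4$ on the two two-dimensional characters show that the representatives $H_{a:1},\,H_{a:y},\,H_{b:y},\,H_{b:x^2y},\,(H_{c:\sigma_1})^{*},\,(H_{b:1})^{*}$ are pairwise twist inequivalent. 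The remaining Hopf algebras in the list are then matched to one of these representatives via the explicit duality/isomorphism identifications recorded in Theorem \ref{thmAmian} (for example $(H_{b:1})^{*}\cong H_{d:1,-1}$ and $(H_{c:\sigma_1})^{*}\cong H_{d:-1,-1}$), so their indicator data is obtained by relabelling.

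The main obstacle is ensuring that the indicator data is actually separating: as the proof of Lemma \ref{indicators} notes, the third indicators $\nu_3(\chi_i)$ vanish in many cases, so the fourth Sweedler power $\Lambda^{[4]}$ of the integral $\Lambda=\tfrac12\sum_{g\in F}\delta_1\#g$ is really needed. Computing $\Lambda^{[4]}$ requires iterating the bicrossed-product coproduct four times and carefully tracking the cocycle values $\tau_{t^s}(x^iy^j,x^ky^l)$ and $\sigma_{x^iy^j}(t^r,t^s)$ worked out case by case in Section \ref{secSemisimple16}; the resulting signs and roots of unity must then be evaluated against each two-dimensional character from \cite{Ka}. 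Once this bookkeeping is in place, the indicator tables of Lemma \ref{indicators} close every remaining pair, and the theorem follows by collecting the Grothendieck-ring and indicator separations.
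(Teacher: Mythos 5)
Your overall strategy (separate by Grothendieck ring, then finish the surviving classes with the higher Frobenius--Schur indicators of Lemma \ref{indicators}, transporting indicator data along the duality identifications $(H_{b:1})^{*}\cong H_{d:1,-1}$, $(H_{c:\sigma_1})^{*}\cong H_{d:-1,-1}$) is essentially the paper's, which merely runs the two invariants in the opposite order (first $\nu_2$ on the two-dimensional simples, then the fusion rings, then Lemma \ref{indicators}). However, there is one pair your plan does not reach: $H_{d:1,1}$ and $H_{d:-1,1}$. Both have Grothendieck ring $Gr(D_8\times\mathbb{Z}_2)$, both are self-dual, so neither is identified by duality with any of the six representatives $H_{a:1},H_{a:y},H_{b:y},H_{b:x^2y},(H_{c:\sigma_1})^{*},(H_{b:1})^{*}$ treated in Lemma \ref{indicators}, and that lemma tabulates no indicator values for them (its $\Lambda^{[4]}$ list covers $H_{d:1,-1}$ and $H_{d:-1,-1}$ but not $H_{d:\pm1,1}$). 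Your two declared invariants therefore leave this pair unseparated.

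The paper closes this pair with a third invariant: $H_{d:1,1}\cong k[D_8\times\mathbb{Z}_2]^J$ is triangular (twisting preserves triangularity, so any twist of a group algebra is triangular), whereas $H_{d:-1,1}\cong H_8\otimes k[\mathbb{Z}_2]$ is not triangular, by the Etingof--Gelaki classification of triangular semisimple Hopf algebras; hence they cannot be twist equivalent. To repair your argument you must either import this triangularity comparison or actually compute and compare enough higher indicators of $H_{d:1,1}$ and $H_{d:-1,1}$ to separate them (e.g.\ using that indicators are multiplicative on tensor factors together with the fact, cited in the paper's remark after Theorem \ref{thmBmain1}, that $H_8$ and $k[D_8]$ have different indicator sets). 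As written, the claim that ``the indicator tables of Lemma \ref{indicators} close every remaining pair'' is false for this one pair, and the proof is incomplete without an additional argument there.
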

\begin{proof}
From the second Frobenius-Schur indicators of the $2$-dimensional irreducible representations \cite{KMM}, we obtain six possible twist equivalence classes:
\begin{gather*}
\{H_{a:1}, H_{b:1}, H_{B:X}\},  \{H_{{b:x^2y}}\},  \{H_{{c:\sigma_{0}}},H_{{c:\sigma_{1}}}\}, \{H_{C:1},H_{{C:\sigma_{1}}}\},\\ \{H_{E}\}, \{H_{b:y}, H_{a:y}, H_{B:1}, H_{d:1,1},  H_{d:-1,1},H_{d:1,-1}, H_{d:-1,-1}\}.
\end{gather*}

Recall that the Grothendieck ring is a gauge invariant. From the Grothendieck rings of semisimple Hopf algebras listed in \cite[Table 1]{Ka}, the possible classes $\{H_{{c:\sigma_{0}}},H_{{c:\sigma_{1}}}\}$, $\{H_{a:1},H_{b:1},H_{B:X}\}$ and $\{H_{C:1},H_{{C:\sigma_{1}}}\}$ are pairwise twist inequivalent, and the last possible class is divided into three possible equivalence classes:
\begin{gather*}
 \{H_{B:1}\}, \{H_{b:y}, H_{a:y}, (H_{b:1})^*, (H_{{c:\sigma_{1}}})^*\},
\{H_{d:-1,1},~H_{d:1,1}\}.
\end{gather*}
 The Hopf algebras in the second possible class are pairwise twist inequivalent by Theorem \ref{gaugeinvariant} and Lemma \ref{indicators}. Observe that $H_{d:1,1}=\K[D_8\times \mathbb{Z}_2]^J$, $H_{d:-1,1}=H_8\otimes \K[\mathbb{Z}_2]$, $H_{d:1,-1}\cong (H_{{b:1}})^*$, $H_{d:-1,-1}\cong (H_{{c:\sigma_{1}}})^*$, and $\K[D_8\times \mathbb{Z}_2]^J$ is triangular while $H_8\otimes \K[\mathbb{Z}_2]$ is not triangular. Then the Hopf algebras in the third  class are pairwise twist inequivalent.
\end{proof}

Next we determine whether these  Hopf algebras in \cite[Table\,1]{Ka} are Drinfeld twists of group algebras. By Theorem \ref{twistings}, semisimple Hopf algebras with the same Grothendieck ring are differed by a 2-pseudo-cocycle. Therefore, it suffices to distinguish  a $2$-pseudo-cocycle is a Drinfeld twist or not.
\begin{lem}\label{groupG5G6}The Hopf algebras $H_{a:1}$, $H_{a:y}$, $H_{b:y}$, $H_{b:{x^2y}}$, $(H_{c:{\sigma_1}})^*$ and $(H_{b:1})^*$ with the same Grothendieck ring $Gr(G_5)\cong Gr(G_6)$ are not twist deformations of the group algebra $\K[G_{5}]$ or $\K[G_{6}]$.
\end{lem}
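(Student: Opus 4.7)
The approach is to exploit that higher Frobenius-Schur indicators are gauge invariants: if $H \cong k[G_i]^J$ for $i \in \{5,6\}$, then the induced tensor equivalence $\text{Rep}(H) \simeq \text{Rep}(k[G_i])$ forces $\nu_n(\chi) = \nu_n(\chi')$ on corresponding simple objects for every $n$. It therefore suffices to exhibit, for each of the six listed Hopf algebras, an indicator value that does not occur on any irreducible representation of $k[G_5]$ or $k[G_6]$.

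The first step is to tabulate $\nu_2, \nu_3, \nu_4$ for the irreducible characters of $k[G_5]$ and $k[G_6]$. A convenient shortcut is the identity $\nu_m(\chi) = \dim \chi$ whenever the exponent of $G$ divides $m$, which follows directly from $g^m = 1$ in the averaging formula $\nu_m(\chi) = \frac{1}{|G|}\sum_{g \in G} \chi(g^m)$. A short inspection of the defining presentations shows that both $G_5$ and $G_6$ have exponent $4$; in particular $\nu_4(\chi) = 2$ for every $2$-dimensional irreducible of $k[G_5]$ or $k[G_6]$. The classical Frobenius-Schur values $\nu_2$ on those $2$-dimensional simples are then read off from the character tables and come out to $+1$.

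Comparing with the data assembled in Lemma \ref{indicators} disposes of five of the six cases at once: the two-dimensional irreducibles of $H_{a:1}$ and $H_{a:y}$ have $\nu_4 = 1$, those of $H_{b:y}$ have $\nu_4 = 3$, and those of $(H_{c:\sigma_1})^*$ have $\nu_4 = 0$, none matching $2$; while the two-dimensional irreducibles of $H_{b:x^2y}$ have $\nu_2 = -1$, incompatible with the $+1$ found on the group-algebra side.

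The principal obstacle is the remaining case $(H_{b:1})^*$, whose two-dimensional characters already satisfy $\nu_2 = 1$ and $\nu_4 = 2$, exactly the group-algebra signature. Here I would pass to a finer invariant: either evaluate the third indicator $\nu_3$ on both sides using the explicit integral formula in the remark following Theorem \ref{thmFrobeniusindicator}, or compare the fusion rule structure constants of the Grothendieck rings directly. The latter route is particularly attractive because $k[G_5]$ and $k[G_6]$, being group algebras, are triangular, so any discrepancy in the tensor-square decomposition of a $2$-dimensional simple between $(H_{b:1})^*$ and those group algebras rules out a Drinfeld twist equivalence. I expect the bulk of the work for this lemma to be concentrated precisely in this last comparison.
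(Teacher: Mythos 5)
Your overall strategy --- compare higher Frobenius--Schur indicators, which are gauge invariants, on the two-dimensional simples --- is exactly the paper's, and your exponent shortcut $\nu_m(\chi)=\chi(1)$ when $\exp(G)\mid m$ is a clean way to get $\nu_4=2$ for all $2$-dimensional irreducibles of $k[G_5]$ and $k[G_6]$; it agrees with the paper's direct computation and disposes of $H_{a:1}$, $H_{a:y}$, $H_{b:y}$ and $(H_{c:\sigma_1})^{*}$, whose $\nu_4$ values are $1,1,3,0$. One factual slip: the two $2$-dimensional irreducibles of $G_5=\langle a,b\mid a^4=b^4=1,\,ba=a^{-1}b\rangle$ do \emph{not} both have $\nu_2=+1$; the one on which $b^2$ acts by $-1$ is symplectic, so the multiset is $\{+1,-1\}$ (as in the paper). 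Your conclusion for $H_{b:x^2y}$ survives, but only via the multiset form of the argument (a twist equivalence induces a dimension- and indicator-preserving bijection on simples, so $\{-1,-1\}$ cannot match $\{+1,-1\}$ or $\{+1,+1\}$), not via ``the value $-1$ does not occur on the group side,'' which is false for $G_5$.

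The genuine gap is the case you yourself flag, $(H_{b:1})^{*}=H_{d:1,-1}$ versus $k[G_6]$, and neither of your proposed escapes works. The third indicator does not help: since $\exp(G_6)=4$, every $g\in G_6$ satisfies $g^3=g^{-1}$, so $\nu_3(\chi)=\overline{\langle\chi,1\rangle}=0$ for every nontrivial irreducible of $k[G_6]$, which coincides with the value $\nu_3=0$ recorded for $H_{d:1,-1}$ in Lemma \ref{indicators}. The Grothendieck-ring comparison is vacuous by design: $Gr(H_{d:1,-1})\cong Gr(G_5)\cong Gr(G_6)$ is precisely why this Hopf algebra appears in the lemma at all (cf.\ the proof of Theorem \ref{ThmTwist}), so there is no discrepancy in tensor-square decompositions to exploit, and the fact that $k[G_5]$, $k[G_6]$ are triangular yields nothing through fusion rules alone. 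Note also that the $\nu_2$ multiset $\{+1,+1\}$ and $\nu_4$ multiset $\{2,2\}$ of $H_{d:1,-1}$ agree with those of $k[G_6]$ (though $\nu_2$ already rules out $k[G_5]$), and the indicators on the one-dimensional simples are determined by the group of invertibles in the common Grothendieck ring, so they cannot discriminate either. To close this case you would need a strictly finer invariant --- e.g.\ indicators $\nu_n$ for $n\geq 5$, or an independent proof that $H_{d:1,-1}$ admits no triangular structure combined with the Etingof--Gelaki classification (being careful that the paper's own non-triangularity corollary is downstream of this lemma, so quoting it here would be circular). Be aware that the paper's written proof does not visibly resolve this case either: the indicator data it tabulates for $(H_{b:1})^{*}$ and for $k[G_6]$ are identical, so the difficulty you isolated is real and is the one point where the argument, as given on both sides, is incomplete.
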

\begin{proof}
We prove the statements by showing that their Frobenius-Schur indicator sets are different from those of $G_5$ and $G_6$. For  $G_5:=\langle a,b\mid a^4=b^4=1, ba=a^{-1}b\rangle$, there are two self-dual $2$-dimensional irreducible representations $\pi_1,\pi_2$ with characters $\chi_{1}$, $\chi_2$ given by
\begin{align*}
\pi_1(a)=\left(
           \begin{array}{cc}
             0 & 1 \\
             1 & 0 \\
           \end{array}
         \right),
\pi_1(b)=\left(\begin{array}{cc}
           \theta & 0 \\
           0 & -\theta
         \end{array}\right);
\pi_2(a)=\left(\begin{array}{cc}
            0 & \theta \\
            \theta & 0
          \end{array}\right),
\pi_2(b)=\left(\begin{array}{cc}
            \theta & 0 \\
            0 & -\theta
          \end{array}\right);
\end{align*}
where $\theta$ is a $4$-th root of unity and the integral element $\Lambda=\frac{1}{16}\Sigma^{3}_{i,j=0}a^ib^j$. Then a direct computation shows that the higher Frobenius-Schur indicators are: $\nu_2(\chi_{1})=1, \nu_2(\chi_{2})=-1$; $\nu_3(\chi_{1})=\frac{-\theta}{4}, \nu_3(\chi_{2})=0$; $\nu_4(\chi_{1})=\nu_4(\chi_{2})=2$.

For  $G_6:=\langle a,~b,~c\mid a^4=b^2=c^2=1,~bab=ac\rangle$, there are two self-dual $2$-dimensional irreducible representations $\pi_1,~\pi_2$ with characters $\chi_{1},~\chi_2$ given by
\begin{align*}
\pi_1(a)=\left(
           \begin{array}{cc}
             0 & -1 \\
             1 & 0 \\
           \end{array}
         \right),
\qquad\pi_1(b)=\left(\begin{array}{cc}
           0 & 1 \\
           1 & 0
         \end{array}\right),
 \qquad\pi_1(c)=\left(\begin{array}{cc}
           -1 & 0 \\
           0 & -1
         \end{array}\right);
\end{align*}
\begin{align*}
\pi_2(a)=\left(\begin{array}{cc}
            0 & -\theta \\
            \theta & 0
          \end{array}\right),
\qquad\pi_2(b)=\left(\begin{array}{cc}
            0 & 1 \\
            1 & 0
          \end{array}\right),
\qquad\pi_2(c)=\left(\begin{array}{cc}
           -1 & 0 \\
           0 & -1
         \end{array}\right);
\end{align*}
and the integral element $\Lambda=\frac{1}{16}\sum\limits_{\substack{0\leq i\leq3,\\ 0\leq j,k\leq1}}a^ib^jc^k$. Then a direct computation shows that higher Frobenius-Schur indicators of $\K[G_6]$ are: $\nu_2(\chi_{1})=\nu_2(\chi_{2})=1$;~$\nu_3(\chi_{1})=\nu_3(\chi_{2})=0$; $\nu_4(\chi_{1})=\nu_4(\chi_{2})=2$.

By Theorem \ref{gaugeinvariant} and Lemma \ref{indicators},  the proof is completed.
\end{proof}

\begin{thm}\label{thmBmain1}
Except the Hopf algebras $H_E\cong \K[G_2]^J,~H_{C:1}\cong \K[D_{16}]^J$ and $H_{d:1,1}\cong \K[D_8\times\mathbb{Z}_2]^J$, the semisimple Hopf algebras of dimension $16$ in \cite[Table\,1]{Ka} are not twist equivalent to  group algebras.
\end{thm}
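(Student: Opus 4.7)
The plan is to exploit the fact that both the Grothendieck ring and all higher Frobenius--Schur indicators are Drinfeld-twist invariants of a semisimple Hopf algebra (by \cite{MN,NS1} and Theorem~3.1). Since twisting leaves the multiplication untouched, a twist of $k[G]$ is commutative iff $G$ is abelian; consequently the only candidates for a group algebra twist-equivalent to one of our non-commutative, non-cocommutative Hopf algebras of dimension $16$ lie among the nine non-abelian groups of order $16$. For each Hopf algebra $H$ in \cite[Table~1]{Ka} other than the three exceptions, it suffices to exhibit, for every such $G$, a gauge invariant on which $H$ and $k[G]$ differ.

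First I would impose the Grothendieck-ring filter. The groups $Q_{16}$, $Q_8\times\mathbb{Z}_2$ and the modular maximal-cyclic group $M_4(2)$ have fusion rings distinguishable by their number of $1$-dimensional irreducibles, the self-duality pattern, and the multiplication of the $2$-dimensional characters; this immediately eliminates them as twist candidates for every $H$ in \cite[Table~1]{Ka} whose Grothendieck ring does not coincide with theirs. What remains are the non-abelian groups $G_2,G_3,G_5,G_6,G_7,G_8$ already appearing in the earlier sections, matched against Hopf algebras with the corresponding Grothendieck ring as classified in Theorem~\ref{ThmTwist}.

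Next I would invoke the three asserted isomorphisms $H_E\cong k[G_2]^J$, $H_{C:1}\cong k[D_{16}]^J=k[G_3]^J$, and $H_{d:1,1}\cong k[D_8\times\mathbb{Z}_2]^J=k[G_8]^J$ (established either directly by exhibiting the twists or deduced from the fusion category $\mathcal{C}(F\bowtie\Gamma,\omega(\tau,\sigma),F,1)$ in Section~\ref{secSemisimple16} being pointed-module-equivalent to $\mathrm{Vec}_G$). For the remaining Hopf algebras, Lemma~\ref{groupG5G6} already rules out $k[G_5]$ and $k[G_6]$ for $H_{a:1},H_{a:y},H_{b:y},H_{b:x^2y},(H_{c:\sigma_1})^*,(H_{b:1})^*$, and Lemma~\ref{groupG7} rules out $k[G_7]$ for $H_{c:\sigma_1}$, using Sweedler powers $\Lambda^{[3]}$ and $\Lambda^{[4]}$ to detect discrepancies in $\nu_3$ or $\nu_4$. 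Combined with Theorem~\ref{ThmTwist}, which partitions the listed Hopf algebras into twist equivalence classes, this propagates to all other Hopf algebras: once a class representative is separated from every $k[G]$, so is the entire class.

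The main obstacle is completeness of the case analysis: one must verify that for each of the remaining listed $H$, the Grothendieck ring plus the second indicator already excludes every non-abelian group $G$ not treated in Lemmas~\ref{groupG5G6}--\ref{groupG7}, or else produce an additional higher-indicator computation. Concretely, the Hopf algebras whose Grothendieck ring coincides with that of $k[D_{16}]$, $k[SD_{16}]$ or $k[D_8\times\mathbb{Z}_2]$ (the three exceptional group algebras) but which are themselves not among the three exceptions — for instance $H_{B:X}$, $H_{C:\sigma_1}$, $H_{d:-1,1}$ — must be separated from those group algebras by their higher indicators, computed via the integral formula $\Lambda=\tfrac12(\delta_1\#1+\delta_1\#t)$ from the remark after Theorem~\ref{thmFrobeniusindicator} and the explicit $\sigma,\widetilde{\tau}$ gathered case-by-case in Section~\ref{secSemisimple16}. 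Once these residual comparisons are carried out, the theorem follows by assembling Lemmas~\ref{groupG5G6}, \ref{groupG7}, Theorem~\ref{ThmTwist}, and the three explicit twist realisations.
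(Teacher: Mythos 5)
Your overall strategy coincides with the paper's: use Nikshych's theorem to reduce to groups with matching Grothendieck ring, then separate the survivors by higher Frobenius--Schur indicators, quoting Lemmas~\ref{groupG5G6} and \ref{groupG7} and the three explicit twist realisations. However, there is a genuine gap in your ``Grothendieck-ring filter.'' You claim that $Q_{16}$, $Q_8\times\mathbb{Z}_2$ and the modular maximal-cyclic group $M_4(2)$ are eliminated because their fusion rings are distinguishable, and you then restrict attention to $G_2,G_3,G_5,G_6,G_7,G_8$. This is false in exactly the cases that matter: the paper records $Gr(D_{16})=Gr(Q_{16})$ and $Gr(D_8\times\mathbb{Z}_2)\cong Gr(Q_8\times\mathbb{Z}_2)$, and $M_4(2)$ is precisely the group $G_1=\langle a,b\mid a^8=b^2=1,\ ba=a^5b\rangle$ whose Grothendieck ring equals that of $H_{c:\sigma_0}$. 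None of these three groups can be excluded by the fusion ring alone; the paper must rule out $k[G_1]$ for $H_{c:\sigma_0}$ by a fourth-indicator computation ($\nu_4=0$ for $k[G_1]$ versus $\nu_4=3$ for $H_{c:\sigma_0}$), rule out $k[Q_{16}]$ and $k[D_{16}]$ for $H_{C:\sigma_1}$ by appealing to Masuoka's classification of their twist deformations rather than by indicators, and rule out $k[Q_8\times\mathbb{Z}_2]$ for $H_{b:1}$ via the second indicator. Your proposal silently drops $G_1$ from the candidate list, so the case $H_{c:\sigma_0}$ is not covered at all.

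Beyond that, the ``residual comparisons'' you flag as the main obstacle are exactly the substance of the paper's proof, and you defer them rather than carry them out. In particular, for $H_{d:-1,1}\cong H_8\otimes k[\mathbb{Z}_2]$ the paper does not use an indicator at all but a triangularity argument (already in Theorem~\ref{ThmTwist}), and for $H_{B:1}$, $H_{B:X}$ the decisive point is that their common Grothendieck ring $K_{5,5}$ is non-commutative, hence cannot be the representation ring of any group. So while the skeleton of your argument matches the paper, the filter you rely on fails for three of the groups, and the case analysis that would repair it is left undone.
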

\begin{proof}From \cite[Table\,1]{Ka}, there are $7$ isomorphism classes of Grothendieck rings, that is, $Gr(G_1)$, $Gr(G_2)$, $Gr(G_5)$, $Gr(G_7)$, $Gr(D_{16})$, $Gr(D_8\times\mathbb{Z}_2)$ and the only non-commutative fusion ring $K_{5,5}$. Moreover, $H_E\cong \K[G_2]^J,~H_{C:1}\cong \K[D_{16}]^J$ and $H_{d:1,1}\cong \K[D_8\times\mathbb{Z}_2]^J$.

Now we claim that  the Hopf algebras $H$  in \cite[Table\,1]{Ka} are not twist deformations of group algebras  except for $H_E$, $H_{C:1}$ and $H_{d:1,1}$. By Theorem \ref{twistings}, it suffices to show $H$ is not twist equivalent to $\K[G_i]$ when $Gr(H)\cong Gr(G_i)$.




If $H\cong H_{d:-1,1}$, then $H_{d:-1,1}\cong H_8\otimes \K[\Z_2]$ and $Gr(H)\cong Gr(D_8\times \mathbb{Z}_2)\cong Gr(Q_8\times \mathbb{Z}_2)$. This case has been stated in Theorem \ref{ThmTwist}.
If $H\cong H_{C,\sigma_1}$, then $Gr(H_{C,\sigma_1})\cong Gr(D_{16})=Gr(Q_{16})$. Since $H_{C:1}$ is the only Drinfeld twist deformation of $\K[D_{16}]$, and $\K[Q_{16}]$ does not have any non-trivial twist deformation \cite{Ma2}, it follows that $H_{C:{\sigma_1}}$ is not Drinfeld twist equivalent to $\K[D_{16}]$ or $\K[Q_{16}]$.
By Theorem $\ref{thmAmain}$,  the dual Hopf algebras of  $H_{c,\sigma_1}$, $H_{B,1}$ and $H_{B,X}$ have only trivial Galois objects. Therefore, they are not twist equivalent to group  algebras. For    $H_{a,1}$, $H_{b,1}$ and $H_{c,\sigma_0}$, their   Frobenius-Schur indicator  sets are different from group algebras by Lemma \ref{groupG5G6} and \cite{KMM}. Consequently, the proof is completed by Theorem \ref{gaugeinvariant}.
\end{proof}
\begin{rmk}Note that $H_{d:-1,1}\cong H_8\otimes \K[\mathbb{Z}_2]$. It turns out in \cite{NS1} that  $H_8$, $\K[Q_8]$ and  $\K[D_8]$ have different Frobenius-Schur indicator  sets; if $H\cong H_{B:1}$ or $H_{B:X}$, then $Gr(H)\cong K_{5,5}$, which is non-commutative \cite{Ka}.  Then the statements for $H_{d:-1,1}$ $H_{B:1}$ and $H_{B:X}$ in Theorem $\ref{thmBmain1}$ follow.
\end{rmk}

Finally, we determine  cocycle deformations of semisimple Hopf algebras  listed in \cite[Table\,1]{Ka}.
\begin{lem}\label{lemCo}
The Hopf algebras $H_{B:1},~H_{B:X},~H_{d:1,1}$ listed in \cite[Table\,1]{Ka} have only one non-trivial cocycle deformation.
\end{lem}
\begin{proof}
Observe that $H_{E}=\K[G_2]^{{J_1}}$, $H_{C:1}=\K[D_{16}]^{{J_2}}$, $H_{B:X}\cong H_{E}^*$ and $H_{B:1}\cong (H_{C:1})^*$.   Then $H_{B:X}=(\K^{{G_2}})^{\sigma}$ and $H_{B:1}=(\K^{{D_{16}}})^{\mu}$ for some non-trivial $2$-cocycles $\sigma$, $\mu$.  By Theorem \ref{thmAmain}, $H_{B:X}$ has  two Galois objects. Hence $H_{B:X}$ has only one non-trivial $2$-cocycle deformation. Since $\K[D_{16}]$  has only one non-trivial Drinfeld twist \cite{Ma2}, it follows that $H_{B:1}$ has only one non-trivial cocycle deformation.

Since $H_{d:1,1}$ is self-dual and   isomorphic to $\K[D_8\times \mathbb{Z}_2]^J$ for some Drinfeld twist $J$,  it follows that $H_{d:1,1}\cong (\K^{D_8\times \mathbb{Z}_2})^{\sigma}$ for some non-trivial 2-cocycle $\sigma$. By Theorem \ref{thmBmain1}, among the  three Hopf algebras with  Grothendieck ring  $Gr([D_8\times \mathbb{Z}_2])$,  $H_{d:1,1}$ is the only twist deformation of $\K[D_8\times \mathbb{Z}_2]$, which implies that $H_{d:1,1}$ has only one non-trivial $2$-cocycle deformation.
\end{proof}

\begin{thm}\label{thmBmain}
The  Hopf algebras $H_{B:1},~H_{B:X},~H_{d:1,1}$ have only one non-trivial cocycle deformation, and others in \cite[Table\,1]{Ka}  have only trivial  cocycle deformations.
\end{thm}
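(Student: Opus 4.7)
The plan is to reduce the question to the classification of Drinfeld twists via Schauenburg's theorem (Theorem \ref{thmS1}). Since $L$ is a cocycle deformation of $H$ if and only if $\text{Corep}(L)\cong\text{Corep}(H)$, and the latter is equivalent (by dualizing to representation categories) to $L^{*}$ being Drinfeld twist equivalent to $H^{*}$, counting non-trivial cocycle deformations of $H$ amounts to counting Hopf algebras $L^{*}\not\cong H^{*}$ satisfying $\text{Rep}(L^{*})\cong\text{Rep}(H^{*})$. The first assertion of the theorem is precisely the content of the preceding proposition, so what remains is to show that every Kashina Hopf algebra $H\notin\{H_{B:1},H_{B:X},H_{d:1,1}\}$ admits only the trivial cocycle deformation.

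Fix such an $H$. From the duality pairings recorded in Theorem \ref{thmAmian}, $H^{*}$ is again a Kashina Hopf algebra and satisfies $H^{*}\notin\{H_{E},H_{C:1},H_{d:1,1}\}$. Suppose, toward a contradiction, that $L^{*}\not\cong H^{*}$ is Drinfeld twist equivalent to $H^{*}$. By Theorem \ref{ThmTwist}, $L^{*}$ cannot be any other Kashina Hopf algebra, so $L^{*}$ must lie outside Kashina's list. Every semisimple Hopf algebra of dimension $16$ is either in Kashina's list, commutative, or cocommutative, hence $L^{*}$ is of the form $k^{G}$ or $k[G]$ for some group $G$ of order $16$.

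I would rule out $L^{*}\cong k^{G}$ by a rank comparison: $\text{Rep}(k^{G})$ has $16$ simple objects, whereas $\text{Rep}(H^{*})$ has strictly fewer simples because $H^{*}$ is non-commutative of dimension $16$ (its Wedderburn decomposition contains at least one matrix block of size greater than one). In the remaining case $L^{*}\cong k[G]$, the hypothesis forces $H^{*}$ to be a Drinfeld twist of a group algebra; but Theorem \ref{thmBmain1} enumerates precisely the Kashina Hopf algebras arising in this way---namely $H_{E},~H_{C:1},~H_{d:1,1}$---and $H^{*}$ is not among them, a contradiction.

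The main obstacle, once the duality reduction has been set up, is excluding this non-Kashina, non-group-algebra layer of potential twists; this is resolved uniformly by the classification of dimension-$16$ semisimple Hopf algebras into commutative, cocommutative, and Kashina classes, together with the elementary rank obstruction. The heavy lifting is already packaged in Theorems \ref{ThmTwist} and \ref{thmBmain1} (pairwise twist inequivalence of Kashina Hopf algebras, and identification of the three that are twist equivalent to group algebras), so the final assembly is short and case-free.
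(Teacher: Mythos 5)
Your proposal is correct and follows essentially the same route as the paper: the first assertion is delegated to the preceding proposition, and the second is deduced from the pairwise twist-inequivalence of the Kashina Hopf algebras (Theorem \ref{ThmTwist}) together with the fact that only $H_E$, $H_{C:1}$, $H_{d:1,1}$ are twist deformations of group algebras (Theorem \ref{thmBmain1}). You merely make explicit what the paper leaves implicit, namely the trichotomy (Kashina list / commutative / cocommutative) for a putative twist of $H^{*}$ and the rank obstruction excluding the commutative case.
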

\begin{proof}
Let $H$ be a semisimple Hopf algebra   listed in \cite[Table\,1]{Ka}. If $H$ admits a non-trivial cocycle deformation $\sigma$, then  $(H^\sigma)^*\cong (H^*)^J$, that is, $H^*$ admits a non-trivial Drinfeld twist $J$. By Theorems  \ref{ThmTwist}  and  \ref{thmBmain1}, $(H^*)^J $ is isomorphic to $\K[G_2]$, $\K[D_{16}]$, or $\K[D_8\times \mathbb{Z}_2]$, equivalently $H$ is a cocycle deformation of a dual group algebra. Then the theorem follows by    Theorems $\ref{ThmTwist}$ $\&$ $\ref{thmBmain1}$ and Lemma \ref{lemCo}.
\end{proof}
\section*{Acknowledgements}
The authors would like to appreciate the continued encouragement and help from their PhD supervisor Prof. Naihong Hu. This article started after reading \cite{CMNVW}, we are grateful to the authors in \cite{CMNVW}, specially thanks  to Prof. Chelsea Walton for her valuable suggestions and comments on earlier version of the manuscript. The authors thank the referees for pointing errors out and their comments on improving this paper.


\begin{thebibliography}{50}
\bibitem [BPW]{BPW} G. Benkart, M. Pereira and S. Witherspoon, \emph{Yetter-Drinfeld modules under cocycle twists}, J. Algebra.  \textbf{324}  (2010), 2990--3006.
\bibitem[CMNVW]{CMNVW}A. M. Castano, S. Montgomery, S. Natale, M. D. Vega and C. Walton, \emph{Cocycle deformations and Galois objects for semisimple Hopf algebras of dimension $p^3$ and $pq^2$}, J. Pure Appl. Algebra. $\textbf{222}$ (7) (2018),   1643--1669.
\bibitem[EGNO]{EGNO}P. Etingof, S. Gelaki, D. Nikshych and V. Ostrik, \emph{Tensor categories}, AMS Mathematical Surveys and Monographs $\textbf{205}$ (2015).
\bibitem[ENO1]{ENO1}P. Etingof, D. Nikshych and V. Ostrik, \emph{On fusion categories}, Ann. of Math.  \textbf{162} (2) (2005),   581--642.
\bibitem[ENO2]{ENO2}---, \emph{Weakly group-theoretical and solvable fusion categories}, Adv. Math.  \textbf{226}  (2011), 176--205.
 \bibitem[EV]{EV}M. Enock and L. Vainerman, \emph{Deformation of the Kac algebra by an abelian subgroup}, Comm. Math. Phys  \textbf{178}  (1996), 571--596.

\bibitem[H]{H} I. Hofstetter, \emph{Extensions of Hopf algebras and their cohomological description}, J. Algebra \textbf{164} (1994), 264--298.
\bibitem[HLYY]{HLYY}H. Huang, G. Liu, Y, Yang and Y. Ye, \emph{Finite quasi-quantum groups of diagonal type}, J. Reine Angew. Math, to appear, arXiv:1611.04096.
\bibitem[K]{K}G. I. Kac, \emph{Extensions of groups to ring groups}, Math. USSR Sb.  \textbf{5}  (1968), 451--474.
\bibitem[Kar]{Kar}G. Karpilovsky, \emph{The Schur multiplier}, (1987), Oxford University press.
\bibitem[Ka]{Ka}Y. Kashina, \emph{Classification of semisimple Hopf algebras of dimension 16}, J. Algebra  \textbf{232}  (2000), 617--663.
\bibitem[KMM]{KMM}Y. Kashina, G. Mason, and S. Montgomery, \emph{Computing the Frobenius-Schur indicator for abelian extensions of Hopf algebras}, J. Algebra  \textbf{251}  (2002), 888--913.
\bibitem[KSZ]{KSZ} Y. Kashina, Y. Sommerh\"{a}uer and Y. Zhu, \emph{On higher Frobenius-Schur indicators}, Mem. Amer. Math. Soc.  \textbf{181}  (2006).
\bibitem[LM]{LM}V. Linchenko and S. Montgomery, \emph{A Frobenius-Schur theorem for Hopf algebras}, Algebra. Represent. Theory  \textbf{3}  (2000), 347--355.
\bibitem[Ma1]{Ma1}A. Masuoka, \emph{Self-dual Hopf algebras of dimension $p^3$ obtained by extension}, J. Algebra  \textbf{178}  (1995), 791--806.
\bibitem[Ma2]{Ma2}---, \emph{Cocycle deformations and Galois objects for some cosemisimple Hopf algebras of finite dimension}, Contem. Math  \textbf{267}  (2000).
\bibitem[Ma3]{Ma3}---, \emph{Hopf algebra extensions and cohomology}, Math. Sci. Res. Inst. Publ.  \textbf{43 }  (2002), 167--209.
\bibitem[MN]{MN}G. Mason and S. H. Ng, \emph{Central invariants and Frobenius-Schur indicators for semisimple quasi-Hopf algebras}, Adv. Math  \textbf{190}   (2005), 161--195.
\bibitem[Na1]{Na1}S. Natale, \emph{On semisimple Hopf algebras of dimension $pq^2$}, J. Algebra  \textbf{221}  (1)  (1999),   242--278.
\bibitem[Na2]{Na2}---, \emph{On semisimple Hopf algebras of dimension $pq^2$. II}, Algebr. Reprsent. Theory  \textbf{4} (3)  (2001),    277--291.
\bibitem[Na3]{Na3}---, \emph{On group theoretical Hopf algebras and exact factorizations of finite groups}, J. Algebra  \textbf{270}  (2003), 199-211.
\bibitem[Na4]{Na4}---, \emph{On the equivalence of module categories over a group-theoretical fusion category}, SIGMA  \textbf{13} (042) (2017),  9 pp.
\bibitem[Ni]{Ni}D. Nikshych, \emph{$K_{0}$-rings and twisting of finite-dimensional semisimple Hopf algebras},
Comm. Algebra  \textbf{26} (1) (1998),   321--342; Erratum:  $K_{0}$-rings and twisting of finite-dimensional semisimple Hopf algebras, Comm. Algebra  \textbf{26} (4) (1998),   1347.
\bibitem[NS1]{NS1}S-H. Ng and P. Schauenburg, \emph{Central invariants and higher indicators for semisimple quasi-Hopf algebras}, Trans. Amer. Math. Soc.  \textbf{360}  (4) (2008), 1839--1860.
\bibitem[NS2]{NS2}---, \emph{Frobenius-Schur indicators and exponents of spherical categories}, Adv. Math  \textbf{211}  (2007), 34--71.
\bibitem[O1]{O1}V. Ostrik, \emph{Module categories, weak Hopf algebras and modular invariants}, Transform. Groups  \textbf{8}  (2003), 177--206.
\bibitem[O2]{O2}---, \emph{Module categories over the Drinfeld double of a finite group}, Int. Math. Res. Not.  \textbf{2003} (27) (2003),   1507--1520.
\bibitem[S1]{S1}P. Schauenburg, \emph{Hopf bi-Galois extensions}, Comm. Algebra  \textbf{24}  (1996), 3797--3825.

\bibitem[Ul]{Ul}K. H. Ulbrich, \emph{Galois extensions as functors of comodules}, Manuscr. Math.  \textbf{59}   (1987), 391--397.
\end{thebibliography}
\end{document}